\newtheorem{thm}{Theorem}[section]
\newtheorem{cor}[thm]{Corollary}
\newtheorem{lemma}[thm]{Lemma}
\newtheorem{prop}[thm]{Proposition}
\theoremstyle{definition}
\theoremstyle{remark}
\newtheorem{rem}[thm]{Remark}
\newcommand{\Z}{\mathbb{Z}}
\newcommand{\N}{\mathbb{N}}
\newcommand{\VR}{\mathcal{VR}}
\newcommand{\AVR}{\mathcal{AVR}}
\newcommand{\X}{\mathcal{VCSH}}
\newcommand{\cd}{\mathrm{cd}}
\newcommand{\res}{\mathrm{res}}
\newcommand{\cores}{\mathrm{cor}}
\begin{document}
\title{Virtually compact special hyperbolic groups are conjugacy separable}

\author{Ashot Minasyan}
\address[Ashot Minasyan]{Mathematical Sciences,
University of Sout\-hampton, Highfield, Sout\-hampton, SO17 1BJ, United Kingdom.}
\email{aminasyan@gmail.com}

\author{Pavel Zalesskii}
\address[Pavel Zalesskii]{Departamento de Matem\'atica, Universidade de
Bras\'{i}lia, 70910-900 Bras\'{\i}lia-DF, Brazil.}
\email{pz@mat.unb.br}

\date{\footnotesize\today}

\keywords{Conjugacy separable, virtually special groups, hyperbolic groups}

\subjclass[2010]{20E26, 20F67}

\begin{abstract}
We prove that any word hyperbolic group which is virtually compact special (in the sense of Haglund and Wise) is conjugacy separable. As a
consequence we deduce that all word hyperbolic Coxeter groups and many classical small cancellation groups are conjugacy separable.

To get the main result we establish a new criterion for showing that elements of prime order are conjugacy distinguished.
This criterion is of independent interest; its proof is based on a combination of discrete and profinite (co)homology theories.
\end{abstract}
\maketitle

\section{Introduction}
One of the main themes of Geometric Group Theory is the study of groups which act on non-positively curved spaces.
Two prominent classes of such groups is the class of hyperbolic groups (defined by Gromov in \cite{Gromov}) and the class of
(virtually) special groups (introduced by Haglund and Wise in \cite{H-W-1}).
The intersection of these two classes is  quite large and its elements, virtually special hyperbolic groups, have particularly nice properties.

Recall that a finitely generated group $G$ is said to be {\it hyperbolic} if its Cayley graph is a $\delta$-hyperbolic metric space, for some $\delta \ge 0$
(see, for example, \cite{Mih}). On the other hand, $G$ is \emph{virtually compact special}, if there is a finite index subgroup $H \leqslant G$, such that $H$ is isomorphic to the
fundamental group of a compact \emph{special cube complex}, whose hyperplanes satisfy certain combinatorial properties (see \cite[Sec. 3]{H-W-1}).

Since the original work of Haglund and Wise \cite{H-W-1}, many hyperbolic groups have been shown to be virtually special. For example, in the paper \cite{HW}
Haglund and Wise showed that hyperbolic Coxeter groups are virtually compact special. In \cite{Wise-qc-h} Wise proved the same for finitely generated
$1$-relator groups with torsion, while in \cite{Agol}
Agol showed this for fundamental groups of closed hyperbolic $3$-manifolds.
In fact, Agol \cite{Agol} proved that any hyperbolic group admitting a proper cocompact action on a CAT($0$)
cube complex is virtually compact special.

In this paper we study conjugacy separability of virtually compact special hyperbolic groups. Recall, that a group $G$ is
{\it conjugacy separable} if for arbitrary non-conjugate elements $x,y \in G$ there is a homomorphism
from $G$ to a finite group $F$ such that the images of $x$ and $y$
are not conjugate in $F$. Conjugacy separability can be regarded as an algebraic analogue of solvability of the conjugacy problem in a group and has a number of applications.
Most prominently it is used in proving residual finiteness of outer automorphism groups (see, for example, the discussion in \cite[Sec. 2]{M-RAAG}).

Conjugacy separability is usually not easy to show, and, until recently, only a few classes of groups were known to satisfy it: virtually free groups \cite{Dyer},
virtually surface groups \cite{Martino} and virtually polycyclic groups \cite{Form, Rem}. Note that in general conjugacy separability does not pass to finite
index overgroups \cite{Gor} or to finite index subgroups \cite{M-M}, therefore the adjective ``virtually'' is important.

A group $G$ is said to be {\it hereditarily conjugacy separable} if every finite index subgroup of $G$ is conjugacy separable.
In \cite{M-RAAG} the first author showed that right angled Artin groups are hereditarily conjugacy separable.
This result was subsequently used to prove conjugacy separability of Bianchi groups \cite{C-Z-Bianchi}, $1$-relator groups with torsion \cite{M-Z_1-rel} and fundamental groups
of compact $3$-manifolds \cite{H-W-Z}. In fact, in \cite{M-RAAG} it was shown that any virtually compact special group $G$ contains a conjugacy separable
subgroup of finite index. But it is still unclear whether such $G$ must necessarily be conjugacy separable itself.
In the present paper we prove this in the case when $G$ is hyperbolic:

\begin{thm}\label{thm:main} Any virtually compact special hyperbolic group is hereditarily conjugacy separable.
\end{thm}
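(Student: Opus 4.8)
The plan is to reduce hereditary conjugacy separability of a virtually compact special hyperbolic group $G$ to a statement about distinguishing individual conjugacy classes in a known-to-be-conjugacy-separable finite-index subgroup, and then push that through using the cohomological criterion advertised in the abstract. Concretely, by \cite{M-RAAG} there is a finite-index subgroup $H \leqslant G$ which is (hereditarily) conjugacy separable; after replacing $H$ by its normal core we may assume $H \trianglelefteq G$ is normal of finite index. Since hereditary conjugacy separability is what we want, it suffices to prove $G$ itself is conjugacy separable (the property for finite-index subgroups of $G$ follows because they are again virtually compact special hyperbolic). So fix non-conjugate $x,y \in G$; the goal is a finite quotient of $G$ separating their conjugacy classes.

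The standard machinery here (going back to Chagas--Zalesskii and the first author's work) says that $G$ is conjugacy separable provided two things hold: first, the \emph{centralizer condition} — for every $g \in G$, the image of $C_G(g)$ in the profinite completion $\widehat{G}$ is dense in $C_{\widehat{G}}(g)$; and second, that $H$ being conjugacy separable and normal in $G$ can be ``upgraded'' to $G$. The first step is to establish the centralizer condition. Because $G$ is hyperbolic, centralizers of infinite-order elements are virtually cyclic and the relevant profinite computation is manageable; the genuinely new input is needed for \emph{torsion} elements, and this is exactly where the criterion for elements of prime order being ``conjugacy distinguished'' enters. The plan is: decompose the problem according to whether $x$ has finite or infinite order; handle the infinite-order case by quasiconvexity of cyclic subgroups in hyperbolic groups together with the conjugacy separability of $H$ and known separability properties of virtually special groups (double coset separability, quasiconvex subgroup separability from Haglund--Wise); and handle the finite-order case by first reducing, via a standard argument on chains of subgroups of prime-power order, to showing that each element of \emph{prime} order is conjugacy distinguished, then invoking the promised new criterion.

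I would organize the finite-order case as follows. Given $x$ of finite order, using conjugacy separability of the normal finite-index subgroup $H$ and a transfer/induction argument, reduce to the case where $x$ has prime order $p$. Then the new criterion — which, per the abstract, is proved by combining discrete and profinite group cohomology (presumably comparing $H^2(\langle x\rangle; -)$ computed in $G$ versus in $\widehat{G}$, controlling the obstruction to conjugating a profinite approximation back into $G$) — shows $x$ is conjugacy distinguished, i.e. the closure of the conjugacy class $x^G$ in $\widehat{G}$ meets $G$ exactly in $x^G$. Combining the infinite-order and finite-order analyses gives the centralizer condition and the conjugacy-distinguishedness of all elements, and then the general conjugacy-separability criterion (the ``$H$ normal, conjugacy separable, plus centralizer condition'' package) yields that $G$ is conjugacy separable. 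Finally, since every finite-index subgroup $G_1 \leqslant G$ is itself a virtually compact special hyperbolic group (special-ness and hyperbolicity are inherited by finite-index subgroups), the same argument applies to $G_1$, giving hereditary conjugacy separability.

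The main obstacle, without question, is the finite-order case — specifically proving the new cohomological criterion and verifying its hypotheses for virtually special hyperbolic groups. The difficulty is that conjugacy of torsion elements cannot be ``seen'' by the kind of geometric/quasiconvexity arguments that work for infinite-order elements, and the naive approach of passing to the conjugacy-separable finite-index subgroup $H$ loses information because torsion in $G$ need not intersect $H$ at all. Bridging this gap requires controlling a cohomological obstruction class living in a second cohomology group, showing that its vanishing in the profinite completion implies vanishing over $\Z$, which is where the interplay between discrete cohomology (finiteness properties coming from the special cube complex, e.g.\ $G$ acting on a CAT($0$) cube complex, and hyperbolicity giving good finiteness of the classifying space) and profinite cohomology (goodness of $G$ in the sense of Serre, again a consequence of virtual specialness) must be exploited. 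Everything else in the proof is a more-or-less standard assembly of known separability results for hyperbolic and virtually special groups.
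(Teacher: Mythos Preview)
Your high-level outline is in the right spirit, but there are two genuine gaps.

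First, the reduction from arbitrary finite order to prime order is \emph{not} a ``standard argument on chains of subgroups of prime-power order'' or a transfer/induction argument using conjugacy separability of $H$. The torsion element $x$ need not lie in $H$ at all, so conjugacy separability of $H$ gives you nothing directly about $x$. The paper's actual mechanism is an induction on the minimal index $n=|G:H|$ of a torsion-free normal subgroup, combined with the Chagas--Zalesskii criterion (Proposition~\ref{prop:C-Z_crit}): when $|x|=n$ is composite, say $n=lm$, one applies the criterion with the intermediate normal subgroup $F=H\langle x^m\rangle$ (hereditarily conjugacy separable by induction since $|F:H|=l<n$), and must then verify that $x$ is conjugacy distinguished in $C_G(x^m)$. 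This last step is itself delicate: it uses that $C_G(x^m)\in\X$ (Lemma~\ref{lem:hyp_in_X}), that the quotient $L=C_G(x^m)/\langle x^m\rangle$ is again in $\X$ with a torsion-free normal subgroup of index $m<n$ (so the induction hypothesis applies to $L$), and a subcase analysis depending on whether the images of $x$ and $y$ are already conjugate in $L$. None of this is routine, and your proposal does not supply a substitute.

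Second, your guess at the cohomological criterion is off. It is not an $H^2$ obstruction to lifting profinite conjugations. The actual input is Brown's theorem that for $n>\mathrm{vcd}(G)$ and $G$ whose elementary abelian $p$-subgroups have rank $\le 1$, the map $H^n(G,\Z/p)\to\prod_\alpha H^n(N_G(C_\alpha),\Z/p)$ (product over conjugacy classes of order-$p$ subgroups) is an isomorphism. One dualizes to homology, shows the images of $H_n(C_{\alpha_1},\Z/p)$ and $H_n(C_{\alpha_2},\Z/p)$ in $H_n(G,\Z/p)$ are distinct for non-conjugate $C_{\alpha_1},C_{\alpha_2}$, and then uses cohomological goodness of $G$ to get an injection $H_n(G,\Z/p)\hookrightarrow H_n(\widehat G,\Z/p)$; since conjugation in $\widehat G$ would force these images to coincide in $H_n(\widehat G,\Z/p)$, one derives a contradiction. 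Cohomological goodness of $G\in\AVR$ is itself proved in the paper (Proposition~\ref{prop:VCSH-good}), via stability under virtual retracts and an HNN decomposition of right angled Artin groups.
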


Conjugacy separability of torsion-free virtually compact special hyperbolic groups was proved in \cite[Cor. 9.11]{M-RAAG}, so the actual novelty of
Theorem \ref{thm:main} is in handling groups with torsion.
In view of Agol's result \cite[Thm. 1.1]{Agol}, the above theorem shows that every hyperbolic group, admitting a proper
cocompact action on a CAT($0$) cube complex, is hereditarily conjugacy separable. This gives an abundance of new examples of (hereditarily) conjugacy separable groups,
some of which we mention in corollaries below.

For any Coxeter group $W$, Niblo and Reeves \cite{Niblo-Reeves} constructed a cube complex $\mathcal C$ on which $W$ acts properly,
and proved that the quotient complex
$\mathcal{X}=W \backslash \mathcal{C}$ is compact if $W$ is hyperbolic.
It follows that any hyperbolic Coxeter group is virtually compact special (originally this is due to Haglund and Wise \cite{HW}), hence
we can use Theorem \ref{thm:main} to deduce:

\begin{cor}\label{cor:Cox} Any hyperbolic Coxeter group is hereditarily conjugacy separable.
\end{cor}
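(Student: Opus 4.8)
The plan is to derive Corollary~\ref{cor:Cox} directly from Theorem~\ref{thm:main}, using only known facts about the geometry of Coxeter groups; essentially no new work is required. First I would recall that for every (finitely generated) Coxeter group $W$ the cube complex $\mathcal{C}$ constructed by Niblo and Reeves \cite{Niblo-Reeves} is a CAT($0$) cube complex on which $W$ acts properly, and that, as they show, the quotient $\mathcal{X}=W\backslash\mathcal{C}$ is compact whenever $W$ is word hyperbolic. Hence a hyperbolic Coxeter group $W$ admits a proper cocompact action on a CAT($0$) cube complex.

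Second, I would conclude from this that such a $W$ is virtually compact special. This is exactly the content of the theorem of Haglund and Wise \cite{HW} on hyperbolic Coxeter groups; alternatively, it follows from Agol's result \cite[Thm.~1.1]{Agol}, quoted in the Introduction, that any hyperbolic group acting properly and cocompactly on a CAT($0$) cube complex is virtually compact special, applied to the Niblo--Reeves action. Either way, $W$ is a virtually compact special hyperbolic group, so Theorem~\ref{thm:main} applies and yields that $W$ is hereditarily conjugacy separable.

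I do not expect any serious obstacle here: the corollary is an immediate application of Theorem~\ref{thm:main} once the relevant cubulation and specialness statements are invoked. The only minor points deserving attention are that the Coxeter group in question is finitely generated (which is automatic, since every hyperbolic group is) and that the hypotheses of the cited results hold verbatim --- properness and cocompactness of the $W$-action together with the CAT($0$) property of the Niblo--Reeves complex. With these in place the proof is complete in a couple of lines.
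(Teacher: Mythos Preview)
Your proposal is correct and matches the paper's own argument essentially line for line: invoke the Niblo--Reeves cubulation to get a proper cocompact action on a CAT($0$) cube complex, deduce virtual compact specialness via Haglund--Wise (or Agol), and then apply Theorem~\ref{thm:main}. There is nothing to add or change.
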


Note that conjugacy separability of hyperbolic even Coxeter groups was proved in \cite{C-M}.

Another family of hyperbolic virtually compact special groups is given by groups with finite small cancellation presentations.
Indeed, in \cite{Wise_sm-c} Wise proved that many classical small
cancellation groups, including $C'(1/6)$ and $C'(1/4)-T(4)$ groups, act properly and cocompactly on CAT($0$) cube complexes.
It is well-known that such groups are hyperbolic, so Agol's result \cite[Thm. 1.1]{Agol} applies and, together with Theorem \ref{thm:main}, it yields

\begin{cor} Let $G$ be a group with a finite $C'(1/6)$ or $C'(1/4)-T(4)$ presentation. Then $G$ is hereditarily conjugacy separable.
\end{cor}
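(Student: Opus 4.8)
The plan is to deduce the corollary by chaining three results from the literature with Theorem~\ref{thm:main}, so the proof will be short. First I would recall the classical fact that a group $G$ given by a finite $C'(1/6)$ presentation, or by a finite $C'(1/4)-T(4)$ presentation, is word hyperbolic. In both cases Dehn's algorithm solves the word problem — via Greendlinger's lemma for $C'(1/6)$ and its $T(4)$ analogue for $C'(1/4)-T(4)$ — which yields a linear isoperimetric inequality for the presentation complex, and hence $\delta$-hyperbolicity of the Cayley graph of $G$. (For the combinatorial small cancellation input one can cite Lyndon--Schupp; the passage from a linear isoperimetric inequality to hyperbolicity is standard.) The finiteness of the presentation is used both to make sense of ``hyperbolic'' in terms of the Cayley graph and to supply the cocompactness needed below.

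Next I would invoke Wise's theorem from \cite{Wise_sm-c}, which states precisely that a group with a finite $C'(1/6)$ or $C'(1/4)-T(4)$ presentation acts properly and cocompactly on a CAT($0$) cube complex. Combining this with the hyperbolicity established in the first step, Agol's theorem \cite[Thm.~1.1]{Agol} applies and shows that $G$ is virtually compact special. Thus $G$ is a virtually compact special hyperbolic group, and Theorem~\ref{thm:main} immediately gives that $G$ is hereditarily conjugacy separable, completing the proof.

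I do not expect a genuine obstacle here: all the heavy lifting resides in the cited results (Wise's cubulation, Agol's virtual specialness theorem, and Theorem~\ref{thm:main}). The only point meriting a little care is bookkeeping of hypotheses — checking that Wise's cubulation statement really covers the $C'(1/4)-T(4)$ case and not merely $C'(1/6)$, and that the finiteness of the presentation is exactly what feeds the cocompactness hypothesis of Agol's theorem — but this is routine verification rather than a substantive difficulty.
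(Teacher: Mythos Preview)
Your proposal is correct and follows exactly the argument the paper gives in the paragraph preceding the corollary: hyperbolicity of finite $C'(1/6)$ and $C'(1/4)-T(4)$ groups, Wise's cubulation theorem \cite{Wise_sm-c}, Agol's virtual specialness theorem \cite[Thm.~1.1]{Agol}, and then Theorem~\ref{thm:main}. The only difference is that you spell out the standard justification for hyperbolicity (Dehn's algorithm and the linear isoperimetric inequality) where the paper simply says ``it is well-known''.
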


Finally, Theorem \ref{thm:main} implies that any group acting properly and cocompactly on the hyperbolic $3$-space is hereditarily
conjugacy separable, because fundamental groups of closed hyperbolic $3$-manifolds are
virtually compact special by a combination of results of Bergeron and Wise \cite{B-W} and Agol \cite{Agol}. Thus we obtain the following statement:

\begin{cor} Any uniform lattice in $PSL_2(\mathbb{C})$ is
hereditarily conjugacy separable.
\end{cor}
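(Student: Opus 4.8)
The plan is to deduce this from Theorem~\ref{thm:main}: it suffices to show that an arbitrary uniform lattice $\Gamma$ in $PSL_2(\mathbb{C})$ is word hyperbolic and virtually compact special. Since $\Gamma$ acts properly discontinuously and cocompactly by isometries on the hyperbolic $3$-space $\mathbb{H}^3$, the \v{S}varc--Milnor lemma shows that $\Gamma$ is finitely generated and quasi-isometric to $\mathbb{H}^3$; hence $\Gamma$ is word hyperbolic by Gromov's criterion. Moreover $\Gamma$ is linear, being a finitely generated subgroup of $PSL_2(\mathbb{C})$, so by Selberg's lemma it contains a torsion-free subgroup $\Gamma_0$ of finite index. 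Then $\Gamma_0$ acts freely, properly discontinuously and cocompactly on $\mathbb{H}^3$, so $M := \Gamma_0 \backslash \mathbb{H}^3$ is a closed hyperbolic $3$-manifold with $\pi_1(M) \cong \Gamma_0$.

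First I would apply the combination of the theorems of Bergeron--Wise~\cite{B-W} and Agol~\cite{Agol} to $\Gamma_0 \cong \pi_1(M)$: the former (building on the surface subgroup theorem of Kahn--Markovic together with Sageev's construction) produces a proper cocompact action of $\Gamma_0$ on a CAT($0$) cube complex, and the latter then gives that $\Gamma_0$ is virtually compact special. In particular there is a finite index subgroup $K \leqslant \Gamma_0$ which is isomorphic to the fundamental group of a compact special cube complex. Since $K$ also has finite index in $\Gamma$, it follows that $\Gamma$ is itself virtually compact special. As we have already observed that $\Gamma$ is word hyperbolic, Theorem~\ref{thm:main} applies and yields that $\Gamma$ is hereditarily conjugacy separable.

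Since this corollary is, in effect, an assembly of previously established results, I do not expect a genuine obstacle here. The only points that require (minor) care are the passage from the possibly torsion-containing lattice $\Gamma$ to a torsion-free finite index subgroup $\Gamma_0$ via Selberg's lemma, so that the closed $3$-manifold results of~\cite{B-W,Agol} are applicable, and the elementary observation that the properties ``word hyperbolic'' and ``virtually compact special'' are stable under passage to finite index overgroups. Both are standard, and no new ideas beyond Theorem~\ref{thm:main} and the cited literature are needed.
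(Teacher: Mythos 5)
Your argument is correct and is essentially the paper's own: the paper likewise deduces the corollary from Theorem~\ref{thm:main} by noting that a uniform lattice acts properly cocompactly on $\mathbb{H}^3$ (hence is hyperbolic) and that fundamental groups of closed hyperbolic $3$-manifolds are virtually compact special by Bergeron--Wise and Agol, with virtual compact specialness passing to the finite index overgroup. You merely make explicit the standard ingredients (\v{S}varc--Milnor and Selberg's lemma) that the paper leaves implicit.
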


The above corollary could also be proved by combining results of Chagas and the second author
\cite[Thm. 2.5 or Thm. 2.7]{C-Z-Bianchi} with a different theorem of Agol from \cite{Agol}, claiming that
closed hyperbolic $3$-manifolds are virtually fibered. 

Let us now say a few words about the proof of Theorem \ref{thm:main}.
One of the main difficulties in it is to separate conjugacy classes of
torsion elements in a finite quotient. To this end we come up with a new approach (see Proposition \ref{prop:prime_order}) which employs (co)homological methods and is
based on a result of K.S. Brown \cite{Brown} allowing one to distinguish conjugacy classes of elements of prime order using group cohomology. In particular we obtain the following
quite general result.

\begin{thm}\label{thm:prime_order_cd} Let $G$ be a  residually finite group with ${\rm vcd}(G)<\infty$.
If $G$ is cohomologically good then every element of prime order is conjugacy distinguished in $G$.
\end{thm}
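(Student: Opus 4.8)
The theorem will be deduced from the criterion of Proposition~\ref{prop:prime_order}; here I sketch the strategy underlying it. The plan is to translate non-conjugacy of two elements of prime order into a statement about mod-$p$ cohomology in high degrees — using a theorem of Brown that is available for groups of finite virtual cohomological dimension — and then to transport that statement into the profinite completion $\hat G$ by means of cohomological goodness, where conjugacy of the images makes it collapse.

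First I would reformulate. As $G$ is residually finite it embeds in $\hat G$, and an element $x$ of order $p$ is conjugacy distinguished in $G$ exactly when $x^{\hat G}\cap G=x^{G}$; equivalently, it suffices to prove that if $y\in G$ and $\hat y=\hat g\,\hat x\,\hat g^{-1}$ for some $\hat g\in\hat G$, then $y$ is conjugate to $x$ in $G$ (note that such $y$ automatically has order $p$). Since $\mathrm{vcd}(G)<\infty$, some finite-index subgroup of $G$ has finite cohomological dimension and is hence torsion-free; fix a torsion-free $H\trianglelefteq G$ of finite index, and put $n=\mathrm{vcd}(G)$. Brown's theory of Farrell--Tate cohomology then applies to $G$: for $m>n$ one has $H^{m}(G;\mathbb F_{p})=\widehat H^{m}(G;\mathbb F_{p})$, and Brown's results describe this group in terms of the conjugacy classes of order-$p$ subgroups of $G$ and the Farrell--Tate cohomology of their normalizers, via restriction maps. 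Recording how $\mathrm{Aut}(\mathbb Z/p)=(\mathbb Z/p)^{\times}$ acts — or, equivalently, working with classes pulled back along a map $B\mathbb Z/p\to BG$ — one obtains an invariant sensitive to the conjugacy class of the \emph{element} $x$, not merely of $\langle x\rangle$. Thus, were $x$ and $y$ not conjugate in $G$, there would be $m>n$ and $\alpha\in H^{m}(G;\mathbb F_{p})$ whose restrictions along $\langle x\rangle\hookrightarrow G$ and $\langle y\rangle\hookrightarrow G$, compared through $\hat g$, are incompatible.

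Next I would transfer this to $\hat G$. Goodness of $G$ gives that inflation $H^{m}(\hat G;M)\to H^{m}(G;M)$ is an isomorphism for all finite $M$ and all $m$; goodness is moreover stable under passing to and extending by finite-index subgroups, so $H$ and the finite-index subgroups of the relevant normalizers contained in it are good as well. Granting — and this is part of what has to be established — that the normalizers $N_{G}(\langle x\rangle)$, $N_{G}(\langle y\rangle)$ and the centralizers entering Brown's formula are cohomologically good, and comparing the discrete with the continuous Lyndon--Hochschild--Serre spectral sequences, one sees that $\alpha$ lifts to $\widehat\alpha\in H^{m}(\hat G;\mathbb F_{p})$ and that its restrictions to the topologically cyclic subgroups $\overline{\langle x\rangle},\overline{\langle y\rangle}\leqslant\hat G$ obey the same formulas. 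But $\hat x$ and $\hat y$ are conjugate in $\hat G$, so these restrictions are compatible through $\hat g$; pulling back along the goodness isomorphisms makes the restrictions of $\alpha$ to $\langle x\rangle$ and $\langle y\rangle$ compatible too, contradicting the previous step. Hence $y$ is conjugate to $x$ in $G$.

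I expect the main obstacle to lie in the interface between the discrete and profinite theories. Two points are delicate: (i) phrasing Brown's criterion so that it genuinely detects the conjugacy class of the element rather than of its cyclic subgroup, which is where the coefficients (or the $(\mathbb Z/p)^{\times}$-action) must be handled with care; and (ii), more seriously, establishing that the centralizers and normalizers occurring in Brown's decomposition are cohomologically good and that this decomposition matches its profinite counterpart — which amounts to controlling the continuous cohomology of $\hat G$ and of subgroups such as $N_{\hat G}(\overline{\langle x\rangle})$ and $C_{\hat G}(\hat x)$ in terms of the discrete cohomology of $N_{G}(\langle x\rangle)$ and $C_{G}(x)$, and relating these profinite normalizers and centralizers to the closures of their discrete counterparts. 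This blend of discrete and profinite (co)homology is precisely the heart of the matter.
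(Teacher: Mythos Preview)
Your overall plan---Brown's decomposition of high-degree mod-$p$ cohomology plus cohomological goodness---is the right one, and you have correctly located the two hazards. But both are genuine gaps as stated: you give no argument for goodness of the normalizers $N_G(\langle x\rangle)$ (and at this level of generality none is available), and your treatment of element-versus-subgroup conjugacy via the $(\Z/p)^\times$-action is left entirely unspecified. The paper sidesteps both by a reduction you omit: before doing any cohomology, replace $G$ by the finite-index subgroup $G_1 = H\langle x\rangle \cong H \rtimes \langle x\rangle$ with $H$ torsion-free. This costs nothing (conjugacy distinguishedness passes up from finite-index subgroups, Lemma~\ref{lem:sep_in_fi_sbgp}) and buys exactly the two missing ingredients.

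With the semidirect product in hand, your point (i) is handled algebraically, with no cohomology at all: the retraction $\xi\colon G_1 \to \langle x\rangle$ separates any two distinct elements of $\langle x\rangle$, so if $\langle y\rangle$ is already conjugate to $\langle x\rangle$ in $G_1$ but $y$ is not conjugate to $x$, the finite abelian quotient $\langle x\rangle$ distinguishes them. Thus Proposition~\ref{prop:prime_order} need only separate conjugacy classes of \emph{subgroups} of order $p$. For point (ii), the semidirect product structure makes each normalizer $N_\alpha = N_{G_1}(C_\alpha)$ retract onto $C_\alpha$, so $\cores\colon H_n(C_\alpha,\Z/p)\to H_n(N_\alpha,\Z/p)$ is injective; combined with Brown's isomorphism $\bigoplus_\alpha H_n(N_\alpha,\Z/p)\cong H_n(G_1,\Z/p)$ this shows that non-conjugate $C_{\alpha_1},C_{\alpha_2}$ have distinct nonzero images already in $H_n(G_1,\Z/p)$. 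Goodness of $G_1$ alone then yields an injection $H_n(G_1,\Z/p)\hookrightarrow H_n(\widehat{G}_1,\Z/p)$ (via the double-dual embedding of a $\Z/p$-vector space), and conjugacy of $C_{\alpha_1}$ and $C_{\alpha_2}$ in $\widehat{G}_1$ would force these images to coincide there---a contradiction. No goodness of normalizers or centralizers, no profinite analogue of Brown's decomposition, and no comparison of $N_{\widehat G}(\overline{\langle x\rangle})$ with $\overline{N_G(\langle x\rangle)}$ is required.
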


Recall that a residually finite group $G$ is {\it cohomologically good}, if the inclusion of $G$ in its profinite completion induces an
isomorphism on cohomology with finite coefficients. An element $g \in G$ is said to be \emph{conjugacy distinguished} if the conjugacy class $g^G$ is closed in the
profinite topology on $G$ (thus $G$ is conjugacy separable if and only if each $g \in G$ is conjugacy distinguished).
The claim of Theorem \ref{thm:prime_order_cd} can be restated by saying that two non-conjugate elements of prime order in $G$ are not conjugate in the profinite completion $\widehat G$; in other words, the embedding of $G$ in $\widehat G$ induces an injective map
on the sets of conjugacy classes of elements of prime order in $G$ and in $\widehat G$. In Corollary \ref{cor:bij_cc} we prove that if, additionally,
$G$ is finitely generated then this map is actually a bijection (in particular, every element of prime order in $\widehat G$ is conjugate to some element in $G$).

To prove Theorem \ref{thm:main} for a hyperbolic virtually compact special group $G$, we first show that $G$ is cohomologically good by proving that this property
is stable under
virtual retractions (Lemma~\ref{lem:retract-good}), and combining this with some results from \cite{H-W-1,GJZ,Lor} (our argument actually does not
make use of the hyperbolicity of $G$ and works, more generally, for almost virtual retracts of right angled Artin groups -- see Proposition \ref{prop:VCSH-good}).
It follows that Theorem \ref{thm:prime_order_cd} can be applied to separate the conjugacy classes of
elements of prime order in $G$. After this we prove that every torsion element of $G$ is conjugacy distinguished essentially by
induction on its order.

\medskip
\noindent{\bf Acknowledgements.} The authors would like to thank Marco Boggi, Fr\'ed\'eric Haglund, Dessislava Kochloukova, Ian Leary and Nansen Petrosyan for enlightening discussions.
The second author was  supported by Capes and CNPq.

\section{Preliminaries}
\subsection{Notation}
Given a group $G$, its subgroups $K,H$ and an element $g \in G$, we will write $C_H(g)=\{h \in H \mid hgh^{-1}=g\}$ to denote the \emph{centralizer} of $g$ in $H$, and
$N_H(K)=\{h \in H \mid hKh^{-1}=K\}$ to denote the \emph{normalizer} of $K$ in $H$.

\subsection{Hyperbolic groups and quasiconvex subgroups}
Recall that a geodesic metric space $Y$ is (Gromov) \emph{hyperbolic} if there exists a constant $\delta \ge 0$ such that for any geodesic triangle $\Delta$ in $Y$,
any side of $\Delta$ is contained in the closed $\delta$-neighborhood of the union of the other sides (cf. \cite{Mih}). A subset $Z \subseteq Y$ is \emph{quasiconvex} if
there is $\varepsilon \ge 0$ such that for any two points $z_1,z_2 \in Z$, any geodesic joining these points is contained in the closed $\varepsilon$-neighborhood of $Z$.

If $G$ is a group generated by a finite set $\mathcal A \subseteq G$, then $G$ is said to be \emph{hyperbolic} if its Cayley graph $\Gamma(G,\mathcal{A})$ is a hyperbolic metric space.
Similarly, a subset $S \subseteq G$ is  \emph{quasiconvex} if it is quasiconvex when considered as a subset of $\Gamma(G,\mathcal{A})$.

Quasiconvex subgroups are very important in the study of hyperbolic groups. Such subgroups are themselves hyperbolic and are quasi-isometrically embedded in $G$ (see \cite{Mih}).
Basic examples of quasiconvex subgroups in hyperbolic groups are centralizers of elements (see \cite[Ch. III.$\Gamma$, Prop. 3.9]{B-H}); this fact
will be important for our argument below.

\subsection{Right angled Artin groups} A right angled Artin group is a group which can be given by a finite
presentation, where the only defining relators are commutators of the generators. To construct such a group, one usually starts with a finite simplicial graph $\Gamma$
with vertex set $V$ and edge set $E$. One then defines the corresponding \emph{right angled Artin group} $A=A(\Gamma)$ by the following presentation:
$$A= \langle V \,\|\, [u,v]=1, \mbox{ whenever } (u,v)\in E \rangle,$$
where $[u,v]=uvu^{-1}v^{-1}$ is the commutator of $u$ and $v$.

For any subset $S \subseteq V$, the subgroup $A_S=\langle S \rangle \leqslant A$ is said to be a \emph{full subgroup} of $A$. It is easy to see that $A_S$ is itself a
right angled Artin group corresponding to the full subgraph $\Gamma_S$ of $\Gamma$, induced by the vertices from $S$. Moreover, $A_S$ is a retract of $A$ --
see \cite[Sec. 6]{M-RAAG}.

Recall that a subgroup $H$, of a group $G$, is a \emph{virtual retract} if $H$ is a retract of some finite index subgroup $K \leqslant G$.
In other words, $H \subseteq K$ and there is a homomorphism $\rho: K \to H$ such that $\rho(K)=H$ and $\rho|_H=\mathrm{id}_H$.

Let $\VR$ denote the class of all groups which are virtual retracts of
finitely generated right angled Artin groups, and let $\AVR$ be the class consisting of all groups $G$ such that $G$ has a finite index subgroup from $\VR$.
We are interested in these specific classes of groups because of the following two results:
in \cite{H-W-1} Haglund and Wise proved that any virtually compact special group $G$ belongs to the class $\AVR$, and in \cite{M-RAAG} the first author showed that
any group $H \in \VR$ is hereditarily conjugacy separable.

\subsection{Profinite topology} The \emph{profinite topology} on a group $G$ is defined by taking finite index subgroups as a basis of neighborhoods of the identity element.
This topology is Hausdorff, i.e., $\{1\}$ is a closed subset of $G$,  if and only if the group $G$ is residually finite. In the latter case,
$G$ embeds in its profinite completion, $\widehat G$, and the profinite topology on $G$ is precisely the restriction of the natural topology of $\widehat G$ to $G$.

A subset $S \subseteq G$ is said to be \emph{separable}
if it is closed in the profinite topology on $G$.
Thus an element $x \in G$ is  conjugacy distinguished if its conjugacy class $x^G=\{gxg^{-1} \mid g \in G\}$ is separable in $G$.
It is not difficult to see that the latter is equivalent to the property that for any element $y \in G$, which is not conjugate to $x$, there is a finite group
$F$ and a homomorphism $\phi:G \to F$, such that $\phi(y)$ is not conjugate to $\phi(x)$ in $F$. It follows that
$G$ is conjugacy separable if and only if all of its elements are conjugacy distinguished.

\subsection{Criteria for conjugacy separability}
The next standard observation will be useful (cf. \cite[Lemma 7.2]{M-M}):
\begin{lemma} \label{lem:sep_in_fi_sbgp} Let $K$ be a subgroup of finite index in a group $G$ and let $x \in K$.
If $x$ is conjugacy distinguished in $K$ then $x$ is conjugacy distinguished in $G$.
\end{lemma}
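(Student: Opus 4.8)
The plan is to reduce the statement to two elementary facts: conjugation by a fixed element is a homeomorphism of the profinite topology, and a subgroup of finite index is closed in that topology.

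First I would decompose the conjugacy class along cosets. Fix a left transversal $t_1,\dots,t_n$ of $K$ in $G$, so that $G=\bigsqcup_{i=1}^n t_iK$. Writing an arbitrary $g\in G$ as $g=t_ik$ with $k\in K$ gives $gxg^{-1}=t_i(kxk^{-1})t_i^{-1}$, and conversely every element of this form lies in $x^G$; hence
$$x^G=\bigcup_{i=1}^n t_i\,(x^K)\,t_i^{-1}.$$
Since a finite union of closed sets is closed, and since conjugation by a fixed element of $G$ is an automorphism of $G$ and therefore a homeomorphism of $G$ in its profinite topology, it suffices to prove that $x^K$ is closed in the profinite topology on $G$.

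Next I would pass between the two topologies. Because $[G:K]<\infty$, every finite index subgroup of $K$ has finite index in $G$, while intersecting a finite index subgroup of $G$ with $K$ gives a finite index subgroup of $K$; it follows that the profinite topology on $K$ coincides with the subspace topology induced from the profinite topology on $G$. Moreover $G\setminus K$ is a union of finitely many (left) cosets of $K$, each of which is open, so $K$ is itself closed in $G$. The hypothesis says $x^K$ is closed in the profinite topology on $K$; being closed in the closed subspace $K$ of $G$, it is therefore closed in $G$. Combined with the reduction of the previous paragraph, this shows $x^G$ is closed in the profinite topology on $G$, i.e.\ $x$ is conjugacy distinguished in $G$.

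I do not expect a genuine obstacle here: the one place where finite index is essential is the identification of the profinite topology of $K$ with the subspace topology from $G$ (equivalently, the closedness of $K$ in $G$), and the statement is tailored to exactly this point. One could instead argue directly in terms of finite quotients — given $y\in G\setminus x^G$, exhibit a finite quotient of $G$ in which the images of $x$ and $y$ are not conjugate — but this forces one to keep track of the conjugating elements simultaneously, so the topological formulation above is the cleaner route.
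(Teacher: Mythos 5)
Your proof is correct, and it is exactly the standard argument the paper has in mind: the lemma is stated there without proof as a known observation (citing Lemma 7.2 of \cite{M-M}), whose usual proof is precisely your decomposition $x^G=\bigcup_{i=1}^n t_i\,(x^K)\,t_i^{-1}$ together with the fact that finite index makes the profinite topology on $K$ the subspace topology and $K$ closed in $G$. Nothing further is needed.
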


The following criterion was discovered by Chagas and the second author in \cite{C-Z-Bianchi}:
\begin{prop}[{\cite[Prop. 2.1]{C-Z-Bianchi}}]\label{prop:C-Z_crit} Let $H$ be a normal subgroup of index $m \in \N$ in a group $G$ and let $x \in G$ be any element. Suppose that $H$ is hereditarily conjugacy separable
and the centralizer $C_G(x^m)$, of $x^m \in H$,  satisfies the following conditions:
\begin{itemize}
    \item[(i)] $x$ is conjugacy distinguished in $C_G(x^m)$;
    \item[(ii)] each finite index subgroup of  $C_G(x^m)$ is separable in $G$.
\end{itemize}
Then $x$ is conjugacy distinguished in $G$.
\end{prop}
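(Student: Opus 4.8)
The plan is to prove that every $y\in G$ which becomes conjugate to $x$ in the profinite completion $\widehat{G}$ is already conjugate to $x$ in $G$; since $H\leqslant G$ has finite index and $H$ is residually finite, so is $G$, and this assertion is equivalent to $x^{G}$ being closed in the profinite topology of $G$, i.e.\ to $x$ being conjugacy distinguished. So suppose $\hat g x\hat g^{-1}=y$ with $\hat g\in\widehat{G}$. Then $\hat g x^{m}\hat g^{-1}=y^{m}$, and $x^{m},y^{m}\in H$ since $m=[G:H]$. Choosing coset representatives $t_{1},\dots,t_{m}\in G$ of $H$ in $G$ and using $H\trianglelefteq G$, one has $(x^{m})^{\widehat{G}}=\bigcup_{j}(t_{j}x^{m}t_{j}^{-1})^{\widehat{H}}$ and $(x^{m})^{G}=\bigcup_{j}(t_{j}x^{m}t_{j}^{-1})^{H}$, with every $t_{j}x^{m}t_{j}^{-1}$ lying in $H$; since $H$ is conjugacy separable this yields $x^{m}\sim_{G}y^{m}$. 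After replacing $y$ by a suitable $G$-conjugate we may therefore assume $y^{m}=x^{m}$. Now any element of $\widehat{G}$ conjugating $x$ to $y$ lies in $C_{\widehat{G}}(x^{m})$, both $x$ and $y$ lie in $C:=C_{G}(x^{m})$, and — since an element of $G$ conjugating $x$ to $y$ must centralize $x^{m}$ — we have $x\sim_{G}y$ if and only if $x\sim_{C}y$. It thus suffices to prove $x\sim_{C}y$.

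Next I would push the conjugating element into $\widehat{H}$. Conjugacy separability of $H$ also gives $C_{\widehat{G}}(x^{m})\widehat{H}/\widehat{H}=CH/H$ inside $G/H$ (for $g\in G$ one has $g^{-1}x^{m}g\sim_{\widehat{H}}x^{m}$ iff $g^{-1}x^{m}g\sim_{H}x^{m}$ iff $g\in CH$, all comparisons being between elements of $H$), so representatives of $C_{\widehat{H}}(x^{m})$ in $C_{\widehat{G}}(x^{m})$ can be chosen inside $C$. Writing the conjugator as $\hat k r$ with $\hat k\in C_{\widehat{H}}(x^{m})$ and $r\in C$, and replacing $x$ by the $C$-conjugate $rxr^{-1}$, we may assume $y=\hat k x\hat k^{-1}$ with $\hat k\in C_{\widehat{H}}(x^{m})=\widehat{H}\cap C_{\widehat{G}}(x^{m})$. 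If $x\in H$ we are done: then $y\in H$, so $x$ and $y$ are conjugate in $\widehat{H}$, hence (conjugacy separability of $H$) conjugate in $H$ by some $h_{0}$, and $h_{0}x^{m}h_{0}^{-1}=y^{m}=x^{m}$ puts $h_{0}\in C_{H}(x^{m})\subseteq C$, so $x\sim_{C}y$.

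The remaining case $x\notin H$ is where conditions (i) and (ii) enter, and it contains the main obstacle. Condition (i) says $x^{C}$ is closed in the profinite topology of $C$, and condition (ii) says that this topology coincides with the subspace topology inherited from $\widehat{G}$; equivalently, the canonical map $\widehat{C}\to\widehat{G}$ is injective, with image the closure $\overline{C}$ of $C$ in $\widehat{G}$. It would therefore be enough to realise the conjugacy $x\sim y$ by an element of $\overline{C}$ rather than merely by $\hat k\in C_{\widehat{G}}(x^{m})$ — that is, to prove $C_{\widehat{G}}(x^{m})=\overline{C}$; and by the coset decomposition from the previous paragraph this reduces to the identity $C_{\widehat{H}}(x^{m})=\overline{C_{H}(x^{m})}$, i.e.\ that in $\widehat{H}$ the centralizer of $x^{m}\in H$ is exactly the closure of the discrete centralizer $C_{H}(x^{m})=C\cap H$. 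Establishing this identity is the heart of the argument. Concretely it is equivalent to the following: for each finite-index normal $U\trianglelefteq C$ one can find a finite-index normal $N\trianglelefteq G$ with $N\cap C\subseteq U$ for which the centralizer in $G/N$ of the image of $x^{m}$ is exactly the image of $C_{G}(x^{m})$; such an $N$ lets one descend a conjugacy from $G/N$ to $C/U$, and together with (i) this forces $y\in x^{C}$, finishing the proof. I expect this centralizer identity to be the genuinely hard step: it should combine the hereditary conjugacy separability of $H$ — used to pin down, in finite quotients of $G$, precisely which elements conjugate $x^{m}$ inside $H$ — with the consequence of (ii) that every finite-index subgroup of $C_{H}(x^{m})$ is separable in $H$. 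Once it is available, the rest is bookkeeping.
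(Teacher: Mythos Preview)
The paper does not supply a proof of this proposition: it is quoted from \cite{C-Z-Bianchi}, with a remark that the argument there (or the variant in \cite[Prop.~2.2]{C-M}) uses only the weakened form of condition~(i) stated here. So there is no in-paper proof to compare against.

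Your outline follows the standard route and is structurally correct: the reduction to $y^{m}=x^{m}$ via conjugacy separability of $H$, the coset argument splitting the profinite conjugator as $\hat k\, r$ with $r\in C$ and $\hat k\in C_{\widehat H}(x^{m})$, and the endgame using (ii) to identify $\overline C$ with $\widehat C$ and then (i) to descend to $C$, are all right. You have also correctly isolated the one nontrivial ingredient, the identity $C_{\widehat H}(x^{m})=\overline{C_{H}(x^{m})}$. One clarification, though: this identity is a property of the element $x^{m}\in H$ alone and follows \emph{purely} from hereditary conjugacy separability of $H$ --- it is precisely the ``centralizer condition'' of \cite{M-RAAG}, where the implication ``hereditarily conjugacy separable $\Rightarrow$ centralizer condition'' is established --- so hypothesis~(ii) plays no role in its proof. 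Condition~(ii) enters only where you first placed it, to get the embedding $\widehat C\hookrightarrow\widehat G$; your closing paragraph's ``concrete'' reformulation in terms of finite quotients of $G$ bundles the two ingredients together, which is valid but slightly obscures this division of labour. As written your proposal identifies but does not prove the centralizer identity; once you invoke the result from \cite{M-RAAG} (or reprove it directly), the argument is complete. The separate treatment of the case $x\in H$ is correct but redundant, being subsumed by the general argument.
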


Note that the original condition (i) from \cite[Prop. 2.1]{C-Z-Bianchi} required $C_G(x^m)$ to be conjugacy separable, however, it is easy to see that
the proof (see also \cite[Prop. 2.2]{C-M} for an alternative argument) only uses the weaker assumption that $x$ is conjugacy distinguished in $C_G(x^m)$.

\subsection{Profinite topology on virtually compact special groups}
Let $\X$ denote the class of all virtually compact special hyperbolic groups.

\begin{rem}\label{rem:fi_in_X} The class $\X$ is closed under taking finite index subgroups and overgroups.
\end{rem}

Indeed, it is immediate from the definitions that a finite index subgroup/overgroup of a virtually compact special group is still virtually compact special.
On the other hand, it is well-known that a group is hyperbolic if and only if a finite index subgroup is hyperbolic (for instance,
this follows from the fact that hyperbolicity is invariant under quasi-isometries -- see \cite[Ch. III.H, Thm. 1.9]{B-H}).

The next statement easily follows from the work of Haglund and Wise in \cite{H-W-1}.

\begin{lemma}\label{lem:hyp_in_X} Suppose that $G \in \X$ and $g \in G$. Then
\begin{itemize}
  \item[(a)] the centralizer $C_G(g)$ also belongs to $\X$;
 \item[(b)] every finite index subgroup of $C_G(g)$ is separable in $G$.
\end{itemize}
\end{lemma}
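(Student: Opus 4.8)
The plan is to deduce both statements from a single substantial input --- the theorem of Haglund and Wise \cite{H-W-1} that a quasiconvex subgroup of a virtually compact special hyperbolic group is a virtual retract of it, and in particular is separable --- together with elementary permanence properties of quasiconvexity. First I would record that, since $G$ is hyperbolic, the centralizer $C_G(g)$ is quasiconvex in $G$; this is precisely \cite[Ch. III.$\Gamma$, Prop. 3.9]{B-H}, recalled in the Preliminaries. In particular $C_G(g)$ is finitely generated, it is quasi-isometrically embedded in $G$, and, being a quasiconvex subgroup of a hyperbolic group, it is itself hyperbolic.

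For part~(a) it therefore only remains to see that $C_G(g)$ is virtually compact special. By \cite{H-W-1} the subgroup $C_G(g)$ is a virtual retract of $G$, i.e.\ a retract of some finite-index subgroup $K \leqslant G$; since $G \in \X$, Remark~\ref{rem:fi_in_X} gives $K \in \X$, so it suffices to know that a retract of a group in $\X$ again lies in $\X$. This is routine: a retract of a hyperbolic group is quasi-isometrically embedded, hence hyperbolic, while --- using that the virtually compact special groups are exactly the members of $\AVR$ --- one checks directly that $\VR$, and therefore $\AVR$, is stable under taking retracts and finite-index subgroups, so $\AVR$ is closed under virtual retracts. Alternatively one may simply invoke the sharper form of \cite{H-W-1} which directly equips a quasiconvex subgroup of the fundamental group of a compact special cube complex with its own compact special structure. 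Either way, $C_G(g) \in \X$.

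For part~(b), let $L$ be a subgroup of finite index in $C_G(g)$. Then $L$ is quasiconvex in $C_G(g)$ --- a finite-index subgroup of a finitely generated group is quasiconvex in it --- and, since $C_G(g)$ is quasiconvex in the hyperbolic group $G$, nesting of quasiconvexity shows that $L$ is quasiconvex in $G$. (Equivalently: writing $C_G(g)$ as a retract of a finite-index subgroup $K \leqslant G$ via some retraction $\rho$, the subgroup $L$ is a retract of the finite-index subgroup $\rho^{-1}(L) \leqslant G$, so $L$ is itself a virtual retract of $G$.) By \cite{H-W-1}, quasiconvex subgroups --- equivalently, virtual retracts --- of virtually compact special hyperbolic groups are separable, so $L$ is separable in $G$, as required.

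The only genuinely non-formal ingredient is the Haglund--Wise theorem on quasiconvex subgroups, which we may quote. The step that needs the most care is part~(a): one must either locate the version of \cite{H-W-1} that directly hands one a compact special structure on a quasiconvex subgroup, or else verify by hand that $\X$ (equivalently $\AVR$) is closed under virtual retracts. Neither is difficult, but this is where the bookkeeping lives; everything else --- the permanence of quasiconvexity under finite index and under nesting --- is standard.
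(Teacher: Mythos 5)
Your proposal is correct and follows essentially the same route as the paper: centralizers of elements in a hyperbolic group are quasiconvex (hence hyperbolic), part (a) follows from Haglund--Wise's result that quasiconvex subgroups of $G$ are virtually compact special (\cite[Cor.~7.8]{H-W-1}, your ``alternative'' route, which is exactly what the paper quotes), and part (b) follows because a finite-index subgroup of $C_G(g)$ is still quasiconvex in $G$ and quasiconvex subgroups are separable (\cite[Cor.~7.4 and Lemma~7.5]{H-W-1}). The only caveat is your first route for (a), which leans on the unproved identification of $\AVR$ with the class of virtually compact special groups (the paper cites only the inclusion ``virtually compact special $\Rightarrow \AVR$''), but since you also give the direct citation this does not affect correctness.
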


\begin{proof} Fix some finite generating set $\mathcal A$ of $G$. Since the group $G$ is hyperbolic, it is well-known that centralizers of elements in $G$ are quasiconvex
(see, for example, \cite[Ch. III.$\Gamma$, Prop. 3.9]{B-H}). Hence $C_G(g)$ is quasiconvex, so it is also hyperbolic (cf. \cite[Lemma 3.8]{Mih}).
In \cite[Cor. 7.8]{H-W-1} Haglund and Wise proved that
any quasiconvex subgroup of $G$ is virtually compact special, thus (a) is proved.

To prove (b), note that every finite index subgroup $N \leqslant C_G(g)$ is also quasiconvex (because there is a constant $c \ge 0$ such that every element of
$C_G(g)$ is at distance no more than
$c$ from an element of $N$ in the Cayley graph $\Gamma(G,\mathcal{A})$). Therefore $N$ is separable in $G$ by \cite[Cor. 7.4 and Lemma 7.5]{H-W-1}.
\end{proof}

\begin{lemma}\label{lem:hcs_sbgp} Any virtually compact special group $G$ has a finite index normal subgroup $H \lhd G$ such that $H \in \VR$,
$H$ is torsion-free and hereditarily conjugacy separable.
\end{lemma}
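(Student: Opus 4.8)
The plan is to combine the two facts recalled just before Lemma~\ref{lem:hcs_sbgp}: by \cite{H-W-1} any virtually compact special group lies in $\AVR$, and by \cite{M-RAAG} any group in $\VR$ is hereditarily conjugacy separable. So given a virtually compact special group $G$, the definition of $\AVR$ gives a finite index subgroup $K \leqslant G$ with $K \in \VR$, meaning $K$ is a virtual retract of a finitely generated right angled Artin group $A$. First I would pass to a torsion-free subgroup: by a result of Haglund and Wise (or directly, since $K$ embeds in a right angled Artin group after passing to a further finite index subgroup, and right angled Artin groups are torsion-free), $K$ has a torsion-free subgroup of finite index. Actually, the cleanest route is to note that $K \in \VR$ itself need not be torsion-free, but $K$ is a virtual retract of the torsion-free group $A$; I would instead argue that $K$, being a virtual retract of a residually finite group, is residually finite, and then use that $G$ (hyperbolic, virtually torsion-free) has a torsion-free finite index subgroup to intersect down.

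More carefully, here are the steps I would carry out. (1) Start with $K_0 \leqslant G$ of finite index with $K_0 \in \VR$, so $K_0$ is a retract of a finite index subgroup of some finitely generated right angled Artin group $A$; in particular $K_0$ embeds in $A$ and hence is torsion-free already, since $A$ is torsion-free and subgroups of virtual retracts that embed into $A$ are torsion-free. Wait — a retract of a subgroup of $A$ is a subgroup of $A$, hence torsion-free. So in fact every group in $\VR$ is torsion-free, and $K_0$ is torsion-free. (2) The class $\VR$ is closed under passing to finite index subgroups: if $H$ is a retract of a finite index subgroup $L \leqslant A$ via $\rho: L \to H$, and $H' \leqslant H$ has finite index, then $\rho^{-1}(H') \cap H = H'$ — more precisely one checks $H'$ is a retract of $\rho^{-1}(H')$, which has finite index in $L$ and hence in $A$; this is exactly \cite[Sec. 6 or similar]{M-RAAG}. (3) Now inside the torsion-free finite index subgroup $K_0$, take the intersection of all $G$-conjugates of $K_0$ (finitely many, since $[G:K_0]<\infty$) to get a finite index normal subgroup $H \lhd G$ with $H \leqslant K_0$. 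Then $H \in \VR$ by step (2), $H$ is torsion-free as a subgroup of $K_0$, and $H$ is hereditarily conjugacy separable by \cite{M-RAAG}.

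The only genuine point requiring care is step (2), the stability of $\VR$ under finite index subgroups, together with verifying that membership in $\VR$ forces torsion-freeness; both are essentially bookkeeping with retractions but should be stated explicitly (and both are available in \cite{M-RAAG}, so in the paper one can simply cite them). I do not expect any serious obstacle here: this lemma is a packaging statement assembling already-cited results, and the main content is the observation that one can simultaneously arrange $H$ to be normal in $G$, of finite index, torsion-free, in $\VR$, and hereditarily conjugacy separable — which follows because all the relevant properties ($\VR$-membership, torsion-freeness, hereditary conjugacy separability) descend to the finite-index normal core $H$ of the given $\VR$-subgroup $K_0$.
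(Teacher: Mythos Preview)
Your proposal is correct and follows essentially the same approach as the paper: obtain a finite index subgroup in $\VR$ from \cite{H-W-1}, observe it is torsion-free because it embeds in a right angled Artin group, and invoke \cite{M-RAAG} for hereditary conjugacy separability. The only difference is that the paper cites \cite{H-W-1} directly for a finite index \emph{normal} subgroup $H \lhd G$ with $H \in \VR$, so your normal-core step and the verification that $\VR$ is closed under finite index subgroups are not needed.
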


\begin{proof} In \cite{H-W-1} Haglund and Wise proved that every virtually compact special group $G$  has
a finite index normal subgroup $H \lhd G$ such that $H \in \VR$. Now, $H$ is torsion-free as right angled Artin
groups are torsion-free, and $H$ is hereditarily conjugacy separable by \cite[Cor. 2.1]{M-RAAG}.
\end{proof}

\section{Cohomological goodness and its applications to conjugacy separability}

Recall that a  group $G$ is {cohomologically good}, if the natural
embedding $G\hookrightarrow \widehat{G}$, of the group in its
profinite completion, induces an isomorphism on cohomology with
finite coefficients. This notion was originally introduced by Serre in \cite[Exercises in Sec. I.2.6]{Serre}.

Cohomological goodness of residually finite groups behaves nicely under certain  free constructions and is stable under group commensurability (see \cite{GJZ,Lor}).
We begin this section with proving another useful permanence property:

\begin{lemma} \label{lem:retract-good} Suppose that $G$ is a residually finite cohomologically good group and $H$ is a virtual retract of
$G$. Then $H$ is cohomologically good.
\end{lemma}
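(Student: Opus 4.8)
The plan is to strip off the word ``virtually'' using the known commensurability invariance of cohomological goodness, and then prove the genuinely new content: a retract of a residually finite cohomologically good group is again cohomologically good. This last step will be a diagram chase comparing discrete group cohomology with continuous profinite cohomology.

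First I would do the reduction. Write $H \leqslant K \leqslant G$ with $[G:K] < \infty$ and a retraction $\rho\colon K \to H$, $\rho|_H = \mathrm{id}_H$. Since $K$ is a subgroup of $G$ it is residually finite, and since it has finite index in $G$ it is cohomologically good by the stability of goodness under commensurability recalled above (\cite{GJZ,Lor}); also $H$ is residually finite, again as a subgroup of $G$. So it suffices to treat the case $K = G$, i.e.\ $H \leqslant G$ with inclusion $\iota\colon H \hookrightarrow G$ and retraction $\rho\colon G \to H$ satisfying $\rho \circ \iota = \mathrm{id}_H$.

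Next I would pass to profinite completions. By functoriality we get continuous homomorphisms $\widehat\iota\colon \widehat H \to \widehat G$ and $\widehat\rho\colon \widehat G \to \widehat H$ with $\widehat\rho \circ \widehat\iota = \mathrm{id}_{\widehat H}$, compatible with $\iota$ and $\rho$ via the canonical maps $H \to \widehat H$ and $G \to \widehat G$. Fix $n \ge 0$ and a finite $H$-module $M$, and regard $M$ as a finite $G$-module (equivalently, a continuous finite $\widehat G$-module) by inflation along $\rho$. The two commuting squares of groups then yield a commutative diagram whose top row is $H^n(\widehat H, M) \xrightarrow{\widehat\rho^{\,*}} H^n(\widehat G, M) \xrightarrow{\widehat\iota^{\,*}} H^n(\widehat H, M)$, whose bottom row is $H^n(H, M) \xrightarrow{\rho^{*}} H^n(G, M) \xrightarrow{\iota^{*}} H^n(H, M)$, and whose vertical arrows are the canonical comparison maps $\Phi_H$ and $\Phi_G$; moreover both horizontal composites are the identity, since $\rho\circ\iota = \mathrm{id}_H$ and $\widehat\rho\circ\widehat\iota = \mathrm{id}_{\widehat H}$. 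By cohomological goodness of $G$, the middle vertical map $\Phi_G$ is an isomorphism.

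Finally I would chase the diagram to conclude $\Phi_H$ is an isomorphism. For injectivity: if $\Phi_H(x) = 0$, then $\Phi_G(\widehat\rho^{\,*}x) = \rho^{*}(\Phi_H x) = 0$, so $\widehat\rho^{\,*}x = 0$ as $\Phi_G$ is injective, whence $x = \widehat\iota^{\,*}\widehat\rho^{\,*}x = 0$. For surjectivity: given $y \in H^n(H,M)$, lift $\rho^{*}(y)$ through the surjection $\Phi_G$ to some $z \in H^n(\widehat G, M)$; then $\Phi_H(\widehat\iota^{\,*}z) = \iota^{*}\Phi_G(z) = \iota^{*}\rho^{*}(y) = y$. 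Hence $\Phi_H$ is an isomorphism for all $n$ and all finite $H$-modules $M$, i.e.\ $H$ is cohomologically good. I expect the only real care needed — the closest thing to an obstacle — to be the bookkeeping: checking that inflation along $\rho$ turns a finite $H$-module into a legitimate finite $G$- and $\widehat G$-module, and that the comparison maps $\Phi_G, \Phi_H$ are natural enough for the displayed square to commute; both are routine.
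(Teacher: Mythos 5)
Your argument is correct and is essentially the paper's own proof: reduce to the retract case via finite-index (commensurability) invariance of goodness, complete the retraction functorially to $\widehat\rho\colon\widehat G\to\widehat H$, view $M$ as a $G$-module by inflation along the retraction, and chase the resulting commutative square using that the two horizontal composites are identities and that the comparison map for $G$ is an isomorphism. No further comment is needed.
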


\begin{proof} Since the cohomological goodness passes to subgroups
of finite index (see  \cite[Lemma~3.2]{GJZ}), we may assume that $H$ is a retract of $G$. Let
$f: G\to H$ be a retraction.  Then the profinite topology on $G$ induces the full profinite topology on $H$ (see, for example, \cite[Lemma 3.1.5]{RZ}), hence
the natural embedding $i: H \to G$ induces an injective continuous map $\widehat i: \widehat H \to \widehat G$ (cf. \cite[Lemma 3.2.6]{RZ}).
Therefore, the functorial property of profinite completions shows that the retraction $f$ induces a retraction $\widehat f: \widehat G \to \widehat H$, giving
rise to the following commutative diagram, where the vertical maps are the natural embeddings of the residually finite groups in their profinite completions:

\begin{equation}\label{eq:rest_diag} \xymatrix{ \widehat H\ar@<0.5ex>[rr]^{\widehat i}&& \widehat{G}\ar@<0.5ex>[ll]^{\widehat f}\\
&&&\\
 H\ar[uu] \ar @<0.5ex>[rr]^{i}    &&
G\ar[uu] \ar@<0.5ex>[ll]^{f}\cr}
\end{equation}

If $M$ is a finite $H$-module, we can turn it into a $G$-module by letting the
kernel of $f$ act trivially on $M$.  Then for any $n \in \N \cup \{0\}$, \eqref{eq:rest_diag} induces the following
commutative diagram of cohomology groups:

$$\xymatrix{H^n(\widehat H,M)\ar
@<0.5ex>[rr]^{{\widehat{f}}^*}\ar[dd]_{\res_H^{\widehat
H}}&& H^n(\widehat{G},M)\ar[dd]^{\res_G^{\widehat G}}\ar@<0.5ex>[ll]^{{\widehat{i}^*}}\\
&&&\\
 H^n(H,M)\ar @<0.5ex>[rr]^{f^*}    &&
H^n(G,M)\ar@<0.5ex>[ll]^{i^*}\cr}$$

Since $f \circ i =\mathrm{id}_H$ and $\widehat f \circ \widehat i =\mathrm{id}_{\widehat H}$, we can deduce that $i^* \circ f^*$ and $\widehat{i}^*\circ \widehat{f}^*$
are identity maps on $H^n(H,M)$ and $H^n(\widehat{H},M)$ respectively.
In particular, the map $\widehat{f}^*$ is injective and the map $i^*$ is surjective.

Since $G$ is cohomologically good the right vertical arrow is a
bijection and we need to show that so is the left vertical arrow.
To see the injectivity, pick an element $h\in H^n(\widehat H,M)$.
Then $\left(f^* \circ \res_H^{\widehat H}\right)(h)=\left(\res_G^{\widehat G} \circ \widehat{f}^*\right)(h)$, implying that $h=0$ if
$\res_H^{\widehat H}(h)=0$.

For surjectivity, observe that $i^*\circ \res_G^{\widehat G}=\res_H^{\widehat H}\circ \widehat{i}^*$ and the map on the left-hand side is surjective,
hence $\res_H^{\widehat H}$ must also be surjective.

Thus $\res_H^{\widehat H}$ is an isomorphism, as required.
\end{proof}

The next statement establishes a connection between cohomological goodness and separability of conjugacy classes of elements of prime order.

\begin{prop}\label{prop:prime_order} Let  $G$ be a
residually finite cohomologically good group of  finite virtual cohomological
dimension. Suppose that  $G$ splits as a semidirect product $G=H \rtimes \langle x \rangle$, where $H\lhd G$ is torsion-free and $x
\in G$ has prime order $p$. Then
the natural embedding of $G$ in $\widehat G$ induces an injective map between
the conjugacy classes of finite subgroups in $G$ and in $\widehat G$.
\end{prop}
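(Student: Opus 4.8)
The plan is to reduce to a statement about subgroups of order $p$ and then to detect their conjugacy classes cohomologically. Since $H$ is torsion-free of prime index $p$ in $G$, every nontrivial finite subgroup of $G$ has order exactly $p$; the trivial subgroup forms a singleton conjugacy class on both sides and receives nothing else, so it suffices to prove that the map $\sigma$ induced by $G\hookrightarrow\widehat G$ on conjugacy classes of order-$p$ subgroups is injective. First I would record some structure. As $H$ has finite index it is closed in the profinite topology of $G$, which induces the full profinite topology on $H$; hence $\widehat H$ embeds in $\widehat G$ as a normal subgroup of index $p$ and $\widehat G=\widehat H\rtimes\langle x\rangle$. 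Moreover $H$ is cohomologically good (goodness passes to finite-index subgroups, \cite[Lemma~3.2]{GJZ}) and is torsion-free of finite cohomological dimension $n:=\cd(H)=\mathrm{vcd}(G)$, so $H^i(\widehat H;\mathbb F_p)\cong H^i(H;\mathbb F_p)=0$ for $i>n$; thus $\cd_p(\widehat H)\le n$ and, by a theorem of Serre \cite{Serre}, $\widehat H$ has no $p$-torsion. Consequently every nontrivial $p$-subgroup of $\widehat G$ also has order $p$ and meets $\widehat H$ trivially. Finally, since $G/H$ (and $\widehat G/\widehat H$) is abelian of prime order, for any order-$p$ subgroup $P\le G$ one checks that $N_G(P)=C_G(P)$ and that this centralizer splits as $C_G(P)=C_H(P)\times P$, with $C_H(P)=C_G(P)\cap H$ and $P$ central in $C_G(P)$; likewise $N_{\widehat G}(\hat Q)=C_{\widehat G}(\hat Q)=C_{\widehat H}(\hat Q)\times\hat Q$ for any order-$p$ subgroup $\hat Q\le\widehat G$.

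The cohomological input is a decomposition theorem of K.\,S.~Brown \cite{Brown}. Because all finite subgroups of $G$ have order dividing $p$ and $\mathrm{vcd}(G)=n<\infty$, Brown's theorem provides, for every $i>n$, a natural isomorphism $H^i(G;\mathbb F_p)\cong\bigoplus_{[P]}H^i(N_G(P);\mathbb F_p)$, the sum running over conjugacy classes of order-$p$ subgroups (it is the normalizer decomposition of Farrell--Tate cohomology, which agrees with ordinary cohomology in degrees above $\mathrm{vcd}$; in our situation the fixed-point sets of distinct order-$p$ subgroups are disjoint, which is why there are no higher-rank contributions), and by the previous paragraph $N_G(P)=C_G(P)$. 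I would then establish the profinite counterpart for $\widehat G$: using $\cd_p(\widehat H)<\infty$, build a profinite classifying space for the family of finite subgroups of $\widehat G$ and rerun Brown's argument to obtain, for all $i>n$, a natural isomorphism $H^i(\widehat G;\mathbb F_p)\cong\bigoplus_{[\hat Q]}H^i(C_{\widehat G}(\hat Q);\mathbb F_p)$ (continuous cohomology, sum over conjugacy classes of order-$p$ subgroups of $\widehat G$). Proving this profinite analogue of Brown's theorem, together with the verification that it is natural with respect to $G\hookrightarrow\widehat G$ — precisely, that under the goodness isomorphism $H^i(\widehat G;\mathbb F_p)\cong H^i(G;\mathbb F_p)$ the $[\hat Q]$-summand is carried into $\bigoplus_{\sigma([P])=[\hat Q]}$ of the $[P]$-summands, via the restriction maps of the isotropy inclusions $C_G(P)\hookrightarrow C_{\widehat G}(\hat Q)$ (after conjugating $P$ into $\hat Q$) — is the step I expect to be the main obstacle; it is here that the discrete and profinite (co)homology theories must be matched.

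Granting this, the conclusion is short. The restriction $H^i(\widehat G;\mathbb F_p)\to H^i(G;\mathbb F_p)$ is an isomorphism (goodness), and for $i>n$ it is block diagonal over $\sigma$; since the fibres of $\sigma$ over $\mathrm{im}\,\sigma$ partition the set $\{[P]\}$, each block $H^i(C_{\widehat G}(\hat Q);\mathbb F_p)\to\bigoplus_{\sigma([P])=[\hat Q]}H^i(C_G(P);\mathbb F_p)$ is an isomorphism, while any $[\hat Q]\notin\mathrm{im}\,\sigma$ would force $H^i(C_{\widehat G}(\hat Q);\mathbb F_p)=0$ for all $i>n$. That is impossible: $C_{\widehat G}(\hat Q)=C_{\widehat H}(\hat Q)\times\hat Q$ with $\cd_p(C_{\widehat H}(\hat Q))\le n$, so for $i>n$ the Künneth formula exhibits the summand $H^0(C_{\widehat H}(\hat Q);\mathbb F_p)\otimes H^i(\hat Q;\mathbb F_p)\cong\mathbb F_p\ne 0$ inside $H^i(C_{\widehat G}(\hat Q);\mathbb F_p)$. (In particular $\sigma$ is onto, giving the surjectivity half of Corollary~\ref{cor:bij_cc}.) For injectivity, write $\sigma^{-1}([\hat Q])=\{[P_1],\dots,[P_r]\}$; then $H^i(C_{\widehat G}(\hat Q);\mathbb F_p)\xrightarrow{\ \sim\ }\bigoplus_{b=1}^{r}H^i(C_G(P_b);\mathbb F_p)$ for all $i>n$, the isomorphism being a sum of restriction maps along product inclusions $C_H(P_b)\times P_b\hookrightarrow C_{\widehat H}(\hat Q)\times\hat Q$, which restrict $H^0$ of the first factor identically and map the order-$p$ factor isomorphically. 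Via the Künneth formula, the graded piece $H^0(C_{\widehat H}(\hat Q);\mathbb F_p)\otimes H^i(\hat Q;\mathbb F_p)\cong\mathbb F_p$ of the left-hand side then maps isomorphically onto $\bigoplus_{b=1}^{r}H^0(C_H(P_b);\mathbb F_p)\otimes H^i(P_b;\mathbb F_p)\cong\mathbb F_p^{\,r}$; an isomorphism $\mathbb F_p\to\mathbb F_p^{\,r}$ forces $r=1$. Hence $\sigma$ is injective and the proposition follows.
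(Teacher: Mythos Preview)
Your plan has a genuine gap at exactly the point you flag: the profinite analogue of Brown's normalizer decomposition for $\widehat G$, together with its block-diagonal compatibility with the discrete decomposition under the goodness isomorphism. Neither of these is available off the shelf. Setting up a Farrell--Tate theory over $\widehat G$ (constructing a profinite classifying space for the family of finite subgroups and the associated isotropy spectral sequence) and then verifying that the resulting splitting matches the discrete one summand-by-summand is a project in itself, and nothing in the hypotheses guarantees it. Without this, the dimension count $\mathbb F_p\cong\mathbb F_p^{\,r}$ at the end has no foundation.

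The paper's proof sidesteps this completely; it never decomposes $H^*(\widehat G,\mathbb F_p)$. It uses Brown's theorem only for the discrete group $G$, dualizes to homology to get that the corestriction map
\[
\varphi=\sum_{\alpha}\cores_{N_\alpha}^G:\ \bigoplus_{\alpha\in I} H_n\bigl(N_G(C_\alpha),\mathbb F_p\bigr)\longrightarrow H_n(G,\mathbb F_p)
\]
is an isomorphism (here $n>\mathrm{vcd}(G)$), and extracts from this the single fact that for $\alpha_1\neq\alpha_2$ the images of $H_n(C_{\alpha_1},\mathbb F_p)$ and $H_n(C_{\alpha_2},\mathbb F_p)$ under $\cores^G$ are distinct nonzero subspaces of $H_n(G,\mathbb F_p)$ (nonzero because $N_\alpha$ retracts onto $C_\alpha$, so $\cores^{N_\alpha}_{C_\alpha}$ is injective). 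Goodness then enters only to produce an \emph{injection} $\rho:H_n(G,\mathbb F_p)\hookrightarrow H_n(\widehat G,\mathbb F_p)$: apply $\mathrm{Hom}_{\mathbb F_p}(-,\mathbb F_p)$ to the isomorphism $H^n(\widehat G,\mathbb F_p)\cong H^n(G,\mathbb F_p)$ and use the canonical embedding of a vector space into its double dual together with Pontryagin duality on the profinite side. The only profinite input beyond this is the elementary fact that an inner automorphism of $\widehat G$ acts trivially on $H_*(\widehat G,\mathbb F_p)$. If $C_{\alpha_1}$ and $C_{\alpha_2}$ were conjugate in $\widehat G$, then $\cores^{\widehat G}_{C_{\alpha_1}}$ and $\cores^{\widehat G}_{C_{\alpha_2}}$ would have the same image in $H_n(\widehat G,\mathbb F_p)$; pulling back through the injection $\rho$ forces the images of $\cores^G_{C_{\alpha_1}}$ and $\cores^G_{C_{\alpha_2}}$ in $H_n(G,\mathbb F_p)$ to coincide, a contradiction. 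No structure of $H^*(\widehat G,\mathbb F_p)$ beyond functoriality is needed.

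In short: replace your attempt to decompose both sides and compare summands with the paper's one-sided argument --- decompose only $H_n(G,\mathbb F_p)$ via Brown, inject it into $H_n(\widehat G,\mathbb F_p)$ via goodness, and use that conjugation in $\widehat G$ is invisible on homology. Your observations that $N_G(P)=C_G(P)=C_H(P)\times P$ and that $\widehat H$ is $p$-torsion-free are correct and useful, but the heavy profinite machinery you propose is not required for injectivity; the paper handles surjectivity (your byproduct) separately and by an entirely different, more elementary, argument in Proposition~\ref{prop:surj}.
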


\begin{proof} Fix any integer $n > {\rm vcd}(G)$.
Let $I$ [respectively, $\hat I$] denote the set of conjugacy classes of subgroups of order $p$ in $G$ [respectively, in $\widehat G$].
For every conjugacy class $\alpha \in I$ choose any subgroup $C_\alpha$, of order $p$, representing it in $G$.
Since all elementary abelian $p$-subgroups of $G$ have rank at most $1$ (as $G=H \rtimes \langle x \rangle$ and $H$ is torsion-free),
we can apply a classical result of Brown (cf. Cor. 7.4 and the Remark below it in Ch. X of \cite{Brown}),
claiming that there is a canonical isomorphism
\begin{equation} \label{eq:Brown} \eta: H^n(G,\Z/p)\to \prod_{\alpha\in I} \, H^n(N_{G}(C_\alpha),\Z/p).\end{equation}

Denote $N_\alpha=N_G(C_\alpha)$, $\alpha \in I$. The above isomorphism $\eta$ can be defined as follows: for each  $\alpha\in I$,
the inclusion $N_\alpha \hookrightarrow G$ induces
the restriction homomorphism $\res_{N_\alpha}^G: H^n(G,\Z/p) \to H^n(N_\alpha,\Z/p)$, and $\eta=\prod_{\alpha \in I} \, \res_{N_\alpha}^G$ is the
corresponding diagonal map.

For our purposes, it is actually more convenient to work with homology instead of cohomology. For each
$\alpha \in I$, the inclusion $N_\alpha \hookrightarrow G$ induces the corestriction homomorphism $\cores_{N_\alpha}^G: H_n(N_\alpha,\Z/p) \to H_n(G,\Z/p)$.
This gives a natural homomorphism
\begin{equation} \label{eq:Brown-homol} \varphi: \bigoplus_{\alpha \in I} H_n(N_\alpha,\Z/p) \to H_n(G,\Z/p),
\end{equation}
defined by the property that the restriction of $\varphi$ to each direct summand $H_n(N_\alpha,\Z/p)$ is the map $\cores_{N_\alpha}^G$.

Since $\Z/p$ is a field, the contravariant functor ${\rm Hom}_{\Z/p}(-,\Z/p)$ induces a natural isomorphism between
${\rm Hom}_{\Z/p} (H_n(F,\Z/p), \Z/p)$ and $H^n(F,\Z/p)$ for any group $F$
(for example by the Universal Coefficient Theorem, cf. \cite[Sec. 3.1, pp. 196-197]{Hatcher}). 
Applying this functor to \eqref{eq:Brown-homol} gives the map $\eta$ from \eqref{eq:Brown}.

If the map $\varphi$ was not injective then we would have a short exact sequence
$$\{0\} \to K \to  \bigoplus_{\alpha \in I} H_n(N_\alpha,\Z/p) \stackrel{\varphi}{\to} H_n(G,\Z/p) \to \{0\},$$
where $K$ is a non-trivial vector space over $\Z/p$. Since $\Z/p$ is a field,
the functor ${\rm Hom}_{\Z/p}(-,\Z/p)$  is exact, so it would give a short exact sequence
$$\{0\} \to H^n(G,\Z/p) \stackrel{\eta}{\to} \prod_{\alpha \in I} H^n(N_\alpha,\Z/p) \to {\rm Hom}_{\Z/p}(K,\Z/p) \to \{0\}.$$
The latter would contradict the fact that $\eta$ is surjective, as ${\rm Hom}_{\Z/p}(K,\Z/p) \neq \{0\}$. Therefore $\varphi$ is injective.
A similar argument shows that $\varphi$ is also surjective, as $\eta$ is injective. Hence the homomorphism $\varphi$ in \eqref{eq:Brown-homol} is an isomorphism.

In particular, we see that if $\alpha_1$ and $\alpha_2$ are distinct elements of $I$ then
\begin{equation}\label{eq:diff_phi}
\varphi(H_n(N_{\alpha_1},\Z/p)) \cap \varphi(H_n(N_{\alpha_2},\Z/p))=\{0\} \mbox{ in } H_n(G,\Z/p).
\end{equation}
By the assumptions, for each $k=1,2$, $G=H \rtimes C_{\alpha_k}$, i.e., $G$  retracts onto $C_{\alpha_k}$. Therefore $N_{\alpha_k}$ also retracts onto $C_{\alpha_k}$, and hence
the corestriction homomorphism $\cores^{N_{\alpha_k}}_{C_{\alpha_k}}: H_n(C_{\alpha_k},\Z/p)\to H_n(N_{\alpha_k},\Z/p)$ is injective.
Since $H_n(C_{\alpha_k},\Z/p) \neq \{0\}$ for $k=1,2$ (as $C_{\alpha_k} \cong \Z/p$), \eqref{eq:diff_phi} shows that
the natural images of $H_n(C_{\alpha_1},\Z/p)$ and $H_n(C_{\alpha_2},\Z/p)$ in
$H_n(G,\Z/p)$ must be distinct.

Now, arguing by contradiction, assume that there exist  distinct $\alpha_1, \alpha_2 \in I$ such that $C_{\alpha_1}$ is conjugate to $C_{\alpha_2}$ in $\widehat G$.
We have the following commutative diagram coming from the natural inclusions:
\begin{equation}\label{eq:comm_diag}
\xymatrix{& \widehat{G} \\
C_{\alpha_1}  \ar[ru] \ar[r] & G \ar[u]& \ar[l] \ar[lu] C_{\alpha_2}\cr}.
\end{equation}

Since $C_{\alpha_k}$ is a closed subgroup of $\widehat G$, $k=1,2$, and $G$ is dense in $\widehat G$,
this diagram induces the following commutative diagram of cohomology groups (for the vertical and diagonal arrows see \cite[Sec. I.2.4 and Exercise 1) in Sec. I.2.6]{Serre}):

\begin{equation}\label{eq:comm_diag-cohomol}
\xymatrix{& H^n(\widehat{G},\Z/p) \ar[ld]_{\res_{C_{\alpha_1}}^{\widehat G}}\ar[d]^{\res_G^{\widehat G}}\ar[rd]^{\res_{C_{\alpha_2}}^{\widehat G}}\\
H^n(C_{\alpha_1},\Z/p)    & H^n(G,\Z/p)\ar[l]^-{\res_{C_{\alpha_1}}^G} \ar[r]_-{\res_{C_{\alpha_k}}^G} &   H^n(C_{\alpha_2},\Z/p)\cr},
\end{equation}
where $\res_G^{\widehat G}$ is an isomorphism by
cohomological goodness of $G$.

Let us apply the ${\rm Hom}_{\Z/p}(-,\Z/p)$ functor to the diagram \eqref{eq:comm_diag-cohomol}.
Pontryagin duality between cohomology and homology of profinite groups (see \cite[Prop. 6.3.6]{RZ}) says that ${\rm Hom}_{\Z/p}(H^n(\widehat G,\Z/p),\Z/p)$ is naturally isomorphic
to $H_n(\widehat G,\Z/p)$. On the other hand, for the discrete group $G$, ${\rm Hom}_{\Z/p}(H^n(G,\Z/p),\Z/p)$ may not be, in general, isomorphic to $H_n(G,\Z/p)$.
However, since ${\rm Hom}_{\Z/p} (H_n(G,\Z/p), \Z/p) \cong H^n(G,\Z/p)$ (as observed above),
the space ${\rm Hom}_{\Z/p}(H^n(G,\Z/p),\Z/p)$ can be thought of as the double dual of $H_n(G,\Z/p)$.
Since there is always a canonical embedding of a vector space into its double dual, we obtain an injective homomorphism $\rho: H_n(G,\Z/p)\to H_n(\widehat G,\Z/p)$, which
fits into the following commutative diagram:
\begin{equation}\label{eq:comm_diag-homol}
\xymatrix{& H_n(\widehat{G},\Z/p) \\
H_n(C_{\alpha_1},\Z/p)  \ar[ru]^{\hat\tau_1} \ar[r]^{\tau_1} & H_n(G,\Z/p) \ar[u]_\rho & \ar[l]_{\tau_2} \ar[lu]_{\hat\tau_2} H_n(C_{\alpha_2},\Z/p)\cr},
\end{equation}
where $H_n(\widehat{G},\Z/p)$ is the profinite homology of $\widehat G$,
$\tau_k=\cores_{C_{\alpha_k}}^G$ and $\hat\tau_k=\cores_{C_{\alpha_k}}^{\widehat G}$, $k=1,2$.

By the assumption, there exists $g \in \widehat G$ such that $C_{\alpha_2}=g C_{\alpha_1} g^{-1}$. Hence we have
$$ \xymatrix{\widehat{G}\ar[d]^{i_g}  &\ar@{_{(}->}[l] C_{\alpha_1} \ar[d]^{i_g|_{C_{\alpha_1}}} \\
\widehat{G} &\ar@{_{(}->}[l] C_{\alpha_2}},$$
where $i_g: \widehat{G} \to \widehat{G}$ is the inner automorphism of $\widehat{G}$ given by $i_g(h)=ghg^{-1}$, for all $h \in \widehat{G}$,
and $i_g|_{C_{\alpha_1}}:C_{\alpha_1} \to C_{\alpha_2}$ is its restriction to $C_{\alpha_1}$.
This leads to the following commutative diagram between the corresponding homology groups:
$$ \xymatrix{H_n(\widehat{G},\Z/p)\ar[d]^{\mathrm{id}} & \ar[l]_{\hat\tau_1}H_n(C_{\alpha_1},\Z/p) \ar[d]^{\cong} \\
H_n(\widehat G,\Z/p) & \ar[l]_{\hat\tau_2} H_n(C_{\alpha_2},\Z/p)} ,$$
Note that the left vertical map is the identity on $H_n(\widehat G,\Z/p)$, as it is induced by an inner
automorphism of $\widehat G$ (this is easy to prove directly, or one can use
\cite[Execise 1) in Sec. I.2.5]{Serre} and apply the Pontryagin duality between $H^n$ and $H_n$).
Therefore we can conclude that
$\hat\tau_1(H_n(C_{\alpha_1},\Z/p))=\hat\tau_2(H_n(C_{\alpha_2},\Z/p))$ in $H_n(\widehat G,\Z/p)$.
Thus, in view of injectivity of the map $\rho$ from \eqref{eq:comm_diag-homol}, in $H_n(G,\Z/p)$ we must have that
$\tau_1(H_n(C_{\alpha_1},\Z/p))=\tau_2(H_n(C_{\alpha_2},\Z/p))$. The latter gives a contradiction with the property that the natural images
of $H_n(C_{\alpha_1},\Z/p)$ and $H_n(C_{\alpha_2},\Z/p)$ in $H_n(G,\Z/p)$ are distinct,
which was proved above as a consequence of the fact that the map $\varphi$ in \eqref{eq:Brown-homol} is injective.

Therefore, $C_{\alpha_1}$ cannot be conjugate to $C_{\alpha_2}$ in $\widehat G$ if $\alpha_1 \neq \alpha_2$ in $I$. This means that
the inclusion $G \hookrightarrow \widehat G$ induces an injective map from $I$ to $\hat I$, as required.
\end{proof}

We are now ready to prove Theorem \ref{thm:prime_order_cd}, stated in the Introduction.

\begin{proof}[Proof of Theorem \ref{thm:prime_order_cd}] Let $p$ be a prime and let $x$ be an element of order $p$ in $G$.
By the assumptions there exists a torsion-free normal subgroup $H \lhd G$, which has finite index in $G$.
Denote $G_1=H \langle x \rangle \leqslant G$. Clearly $G_1$ has finite index in $G$, and $G_1\cong H \rtimes \langle x \rangle$.
Therefore $G_1$ is residually finite and ${\rm vcd}(G_1)={\rm vcd}(G)<\infty$.
Moreover, $G_1$ is cohomologically good since this property passes to finite index subgroups and overgroups (see \cite[Lemma 3.2]{GJZ}).
Thus  the group $G_1$ satisfies all the assumptions of Proposition \ref{prop:prime_order}.

Consider any element $y \in G_1$, which is not conjugate to $x$. If $y$ and $x$ have different orders, then, using residual finiteness of $G_1$,
we can find a finite quotient $M$, of $G_1$, where the images of $y$ and $x$ still have different orders, and hence they will not be conjugate in $M$.
Therefore in this case $M$ will be a
finite quotient of $G_1$ distinguishing the conjugacy classes of $y$ and $x$.

So, now we can suppose that $y$ also has order $p$. If $\langle y \rangle$ is not conjugate to $\langle x \rangle$ in $G_1$, then, by Proposition \ref{prop:prime_order},
these subgroups are also not conjugate in $\widehat G_1$.
Hence $y$ is not conjugate to $x$ in $\widehat G_1$, i.e., $y \notin x^{\widehat{G}_1}$. Now, the conjugacy class $x^{\widehat{G}_1}$ is closed in
$\widehat G_1$, as $\widehat G_1$ is
compact, so $x^{\widehat G_1} \cap G_1$ is a separable subset of $G_1$ which contains $x^{G_1}$ but avoids $y$. It follows that
there is a finite quotient of $G_1$ distinguishing the conjugacy classes of
$x$ and $y$.

Thus we can further assume that $\langle y \rangle$ is conjugate to $\langle x \rangle$ in $G_1$. Then $hyh^{-1}=z$ for some $h \in G_1$ and some
$z \in \langle x \rangle$. Note that $z \neq x$
as $y$ is not conjugate to $x$ in $G_1$, by our assumption.
Consequently, $z=\xi(z) \neq \xi(x)=x$, where $\xi:G_1 \to \langle x \rangle$ is the natural retraction (coming from the semidirect product decomposition of $G_1$).
Since the group $\langle x \rangle$ is abelian, we can conclude that $\xi(y)=\xi(z)$ is not conjugate to $\xi(x)$ in it, so $\langle x \rangle$
is a finite quotient of $G_1$ distinguishing the conjugacy classes of $x$ and $y$.

Thus we have considered all possibilities, showing that $x$ is conjugacy distinguished in $G_1$. It remains to apply Lemma \ref{lem:sep_in_fi_sbgp} to conclude that
$x$ is conjugacy distinguished in $G$, as required.
\end{proof}

Proposition \ref{prop:prime_order} shows that, under its assumptions, the natural inclusion $G \to \widehat{G}$ induces an injective map between
the conjugacy classes of prime order subgroups in $G$ and in $\widehat G$. To complement this, we will now show this map is also surjective, provided $G$ has finitely many conjugacy classes of elements of prime order (the latter will be satisfied if $G$ is finitely generated -- see Corollary \ref{cor:bij_cc} below).

\begin{lemma}\label{lem:f-cd-t-f} Suppose that $H$ is a cohomologically good group with ${\rm cd}(H)=n<\infty$. Then $\cd(\widehat{H})\le n$; in particular, $\widehat H$ is torsion-free.
\end{lemma}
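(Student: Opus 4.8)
The plan is to verify the defining inequality for profinite cohomological dimension directly, transferring the vanishing of high-degree cohomology from $H$ to $\widehat H$ by means of goodness. Recall (see \cite[Ch.~I, \S3.1]{Serre}) that for a profinite group $\Gamma$ one has $\cd(\Gamma)\le n$ if and only if $H^{q}(\Gamma,M)=0$ for every $q>n$ and every finite discrete $\Gamma$-module $M$ (it is in fact enough to treat $q=n+1$, and one may even restrict to simple $p$-primary modules, but we will not need this refinement).

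So I would fix a finite discrete $\widehat H$-module $M$ and an integer $q>n$. Viewing $M$ as a finite $H$-module via the canonical homomorphism $H\hookrightarrow\widehat H$ (the action of $H$ factors through a finite quotient), cohomological goodness of $H$ provides an isomorphism $H^{q}(\widehat H,M)\cong H^{q}(H,M)$. Since $\cd(H)=n$, we have $H^{q}(H,M)=0$, and therefore $H^{q}(\widehat H,M)=0$. As this holds for every finite $\widehat H$-module $M$ and every $q>n$, the criterion recalled above yields $\cd(\widehat H)\le n$.

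For the last assertion, suppose $\widehat H$ had a non-trivial torsion element; then it would contain a closed subgroup $Q\cong\Z/p$ for some prime $p$. By \cite[Ch.~I, \S3.3]{Serre}, the cohomological $p$-dimension of a closed subgroup does not exceed that of the ambient group, so $\cd_p(Q)\le\cd_p(\widehat H)\le\cd(\widehat H)\le n$. But $\cd_p(\Z/p)=\infty$, since $H^{m}(\Z/p,\Z/p)\neq0$ for infinitely many $m$ — a contradiction. Hence $\widehat H$ is torsion-free. The argument is essentially formal; the only points needing care are the two standard facts about profinite cohomological dimension quoted from \cite{Serre} — the reduction to finite coefficient modules and the monotonicity under passing to closed subgroups — and the verification that goodness is being applied to the right class of modules, namely the finite discrete $\widehat H$-modules. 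I do not expect a genuine obstacle here.
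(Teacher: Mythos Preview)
Your proof is correct and follows essentially the same route as the paper's: transfer the vanishing of $H^{q}(\widehat H,M)$ for $q>n$ and finite $M$ from the corresponding vanishing for $H$ via goodness, then deduce torsion-freeness from monotonicity of $\cd_p$ under closed subgroups together with $\cd_p(\Z/p)=\infty$. The only cosmetic difference is that the paper checks vanishing on simple discrete $\widehat H$-modules (which are finite) and cites \cite[Prop.~7.1.4]{RZ} for the resulting bound on $\cd_p$, whereas you test all finite modules directly and cite the equivalent criterion from \cite{Serre}.
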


\begin{proof} If $A$ is any simple discrete $\widehat H$-module, then $A$ is finite (because $\widehat H$ is compact and its action on $A$ is continuous), so
$H^{n+1}(\widehat{H},A)\cong H^{n+1}(H,A) =\{0\}$ by cohomological goodness of $H$ and the assumption that ${\rm cd}(H)<n+1$.
Hence ${\cd}_p(\widehat H)\le n$ for every prime $p$ by \cite[Prop. 7.1.4]{RZ}, therefore
$${\rm cd}(\widehat H):=\sup \{ {\rm cd}_p(\widehat{H}) \mid p \mbox{ prime}\} \le n.$$

Finally, since ${\rm cd}_p(C) \le {\rm cd}_p(\widehat{H})<\infty$ for each prime $p$ and every closed subgroup $C \leqslant \widehat H$ (cf. \cite[Thm. 7.3.1]{RZ}),
and ${\rm cd}_p(\Z/p)=\infty$ we can conclude that
$\widehat H$ cannot contain subgroups of order $p$, for any prime $p$. Thus $\widehat H$ must be torsion-free, as claimed.
\end{proof}

\begin{prop} \label{prop:surj} Let $p$ be a prime and let $G$ be a residually finite cohomologically good group  such that ${\rm vcd}(G)<\infty$ and
$G$ contains finitely many conjugacy classes of subgroups (or, equivalently, elements) of order $p$. Then every element of order $p$ in the profinite completion $\widehat G$ is
conjugate to some element of $G$.
\end{prop}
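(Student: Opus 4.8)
The plan is to mirror the strategy of Proposition \ref{prop:prime_order}, but running Brown's cohomological decomposition simultaneously over $G$ and over its profinite completion $\widehat G$, and then using a counting argument to show that no new conjugacy class of order-$p$ subgroups can appear in $\widehat G$. First I would pass to a finite-index normal subgroup $H \lhd G$ that is torsion-free (available since ${\rm vcd}(G)<\infty$ and $G$ is residually finite), so that $\widehat H$ is open and normal in $\widehat G$; by Lemma \ref{lem:f-cd-t-f}, $\widehat H$ is torsion-free. Hence every subgroup of order $p$ in $\widehat G$, and in $G$, is elementary abelian of rank exactly one — it intersects $\widehat H$ (respectively $H$) trivially — which is precisely the hypothesis needed to invoke Brown's theorem (Ch. X, Cor. 7.4 of \cite{Brown}) in the form of the isomorphism \eqref{eq:Brown}.

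Next I would fix $n > {\rm vcd}(G) = {\rm vcd}(\widehat G)$ and write down the analogue of \eqref{eq:Brown} for the profinite group $\widehat G$: letting $\hat I$ be the set of conjugacy classes of subgroups of order $p$ in $\widehat G$, with representatives $\widehat C_\beta$ and normalizers $\widehat N_\beta = N_{\widehat G}(\widehat C_\beta)$, Brown's theorem (applied to profinite cohomology, which is legitimate since the relevant $\widehat G$-module $\Z/p$ is finite and $\widehat H$ is a torsion-free open normal subgroup) yields
$$\widehat\eta: H^n(\widehat G,\Z/p) \xrightarrow{\ \cong\ } \prod_{\beta \in \hat I} H^n(\widehat N_\beta,\Z/p).$$
Because $G$ is cohomologically good, $\res_G^{\widehat G}: H^n(\widehat G,\Z/p) \to H^n(G,\Z/p)$ is an isomorphism, and the injective map $I \hookrightarrow \hat I$ from Proposition \ref{prop:prime_order} makes the two Brown decompositions compatible via restriction. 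The dimension of $H^n(G,\Z/p)$ over $\Z/p$ is finite (as ${\rm vcd}(G)<\infty$ and $G$ is finitely generated up to finite index — or, more directly, because $G$ has only finitely many classes in $I$ and each $H^n(N_\alpha,\Z/p)$ is finite-dimensional, which follows from $N_\alpha$ retracting onto $C_\alpha \cong \Z/p$ together with goodness of $N_\alpha$). Comparing dimensions on both sides of $\widehat\eta$ and of $\eta$, using that each $H^n(\widehat N_\beta,\Z/p)$ is nonzero (it surjects onto $H^n(\widehat C_\beta,\Z/p) \neq \{0\}$ via the retraction $\widehat N_\beta \to \widehat C_\beta$ coming from $\widehat G = \widehat H \rtimes \langle x\rangle$-type splitting, or at least the corestriction argument as in Proposition \ref{prop:prime_order}), I would conclude that $|\hat I|$ cannot exceed $|I|$; combined with the injection $I \hookrightarrow \hat I$ this forces $I = \hat I$, i.e. every order-$p$ subgroup of $\widehat G$ is conjugate in $\widehat G$ to one of the $C_\alpha \subseteq G$.

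Finally I would translate the statement about subgroups into the statement about elements: if $\widehat C_\beta = g C_\alpha g^{-1}$ for some $g \in \widehat G$ and $C_\alpha \leqslant G$, then any generator of $\widehat C_\beta$ is conjugate to some generator of $C_\alpha$, which lies in $G$; so every element of order $p$ in $\widehat G$ is conjugate to an element of $G$. The main obstacle I anticipate is the finite-dimensionality and nonvanishing bookkeeping on the profinite side: one must be careful that Brown's theorem applies verbatim to the profinite group $\widehat G$ (which needs the torsion-free open normal subgroup $\widehat H$ and finiteness of the coefficient module, both of which are in hand), that the product $\prod_{\beta \in \hat I}$ is a priori over a possibly infinite index set, and that each factor $H^n(\widehat N_\beta,\Z/p)$ is genuinely nonzero and finite-dimensional so that the dimension count $|\hat I| \le \dim_{\Z/p} H^n(\widehat G,\Z/p) = \dim_{\Z/p} H^n(G,\Z/p) = |I| \cdot (\text{bounded}) < \infty$ actually bounds $|\hat I|$ — in particular one should phrase the count as "$H^n(\widehat G,\Z/p)$ has a direct-sum/product decomposition indexed by $\hat I$ with each summand nonzero, hence $\hat I$ is finite with $|\hat I| \le \dim_{\Z/p} H^n(\widehat G,\Z/p)$", and then show the injection $I \hookrightarrow \hat I$ already accounts for a subspace of the same total dimension, so it must be onto. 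Care is also needed to ensure the compatibility square between $\eta$, $\widehat\eta$, $\res_G^{\widehat G}$, and the family of restrictions $\res_{N_\alpha}^{\widehat N_\beta}$ genuinely commutes, which is where the naturality of Brown's isomorphism is used.
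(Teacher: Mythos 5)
Your proposal follows a genuinely different route from the paper, and it has gaps that I do not see how to close under the stated hypotheses. The first and most serious one is at the very start: Brown's decomposition \eqref{eq:Brown} (Ch.\ X, Cor.\ 7.4 of \cite{Brown}) requires that \emph{every} elementary abelian $p$-subgroup of the group have rank at most $1$. In Proposition \ref{prop:prime_order} this is exactly what the extra hypothesis $G=H\rtimes\langle x\rangle$ with $H$ torsion-free and $|x|=p$ buys. Proposition \ref{prop:surj} makes no such assumption: the fact that an order-$p$ subgroup meets the torsion-free finite-index subgroup $H$ trivially is automatic and says nothing about the absence of subgroups isomorphic to $(\Z/p)^2$ (take $G=H\times(\Z/p)^2$, which satisfies all hypotheses of the proposition). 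So the discrete-side isomorphism $\eta$ you want is not available in the generality needed, and the same objection applies to your claimed retraction $\widehat N_\beta\to\widehat C_\beta$ ``coming from an $\widehat H\rtimes\langle x\rangle$-type splitting'': no splitting is assumed here. Second, the profinite analogue $\widehat\eta$ of Brown's theorem for $\widehat G$ is asserted rather than proved; Brown's argument is about Farrell--Tate cohomology of discrete groups of finite vcd and does not transfer verbatim to profinite groups (results of this type do exist in the literature, but you would have to invoke or prove them, and neither the paper nor your sketch does). Third, even granting both decompositions and the injection $I\hookrightarrow\hat I$, your dimension count does not go through as stated: goodness gives the single global equality $\dim_{\Z/p}H^n(\widehat G,\Z/p)=\dim_{\Z/p}H^n(G,\Z/p)$, but to conclude $\hat I=I$ you need factor-by-factor control, e.g.\ $\dim H^n\bigl(N_{\widehat G}(C_\alpha),\Z/p\bigr)\ge\dim H^n\bigl(N_G(C_\alpha),\Z/p\bigr)$ for each $\alpha\in I$. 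The profinite normalizer $N_{\widehat G}(C_\alpha)$ is in general strictly larger than the closure of $N_G(C_\alpha)$, and $N_G(C_\alpha)$ need not be dense in it, so no restriction map with controlled dimension is in hand; without such a comparison the equality of total dimensions is compatible with extra classes in $\hat I$.

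For contrast, the paper's proof avoids all cohomological decompositions of $\widehat G$ and is essentially a compactness argument: if $C=\langle\gamma\rangle\le\widehat G$ of order $p$ were not conjugate into $G$, one separates $C$ from the finitely many $\widehat G$-conjugacy classes of order-$p$ subgroups meeting $G$ by an open normal subgroup $U$, shrinks $U$ inside the closure of a torsion-free finite-index normal subgroup so that $U$ is torsion-free (Lemma \ref{lem:f-cd-t-f}), and then shows $H=G\cap CU$ is a torsion-free finite-index subgroup of $G$ whose closure $\overline H=CU$ contains the torsion element $\gamma$ --- contradicting Lemma \ref{lem:f-cd-t-f} applied to $H$. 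The only cohomological input is that goodness plus finite cohomological dimension forces the profinite completion of a torsion-free subgroup to be torsion-free, which is much weaker than what your approach would require.
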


\begin{proof} Arguing by contradiction suppose that there is some element $\gamma \in \widehat{G}$, of order $p$, such that $C=\langle \gamma \rangle$ is not conjugate to any subgroup of $G$.  By the assumptions, only finitely many conjugacy classes $\mathcal{C}_1,\dots,\mathcal{C}_k$, of subgroups of order $p$ in $\widehat G$,
intersect $G$ non-trivially.
Since each  $\mathcal{C}_i$, $i=1,\dots,k$, is a compact subset of $\widehat G$, avoiding the finite subgroup $C$,  there is a normal open subgroup $U$ of $\widehat G$
such that $CU \cap \mathcal{C}_i=\emptyset$ for every $i=1,\dots,k$. Since ${\rm vcd}(G)<\infty$, $G$ contains a normal torsion-free subgroup $K$ of finite index.
Then the closure $\overline{K}$, of $K$ in $\widehat G$, is naturally isomorphic to $\widehat K$, and hence it is torsion-free by Lemma \ref{lem:f-cd-t-f}
($K$ is cohomologically good by  \cite[Lemma 3.2.6]{RZ} and ${\rm cd(K)}={\rm vcd}(G)<\infty$).
So, after replacing $U$ by $U \cap \overline{K}$, we can assume that $U$ is torsion-free.

Now, $CU$ is an open subgroup of $\widehat G$, so $H=G \cap CU$ is a finite index subgroup of $G$, whose closure $\overline H$ in $\widehat G$ coincides with $CU$
(see \cite[Prop. 3.2.2]{RZ}). Since $H \cap \mathcal{C}_i=\emptyset$, $i=1,\dots,k$, and every subgroup of order $p$ in $G$ is contained in some $\mathcal{C}_i$,
we can conclude that $H$ has no elements of order $p$. On the other hand, since $CU$ is an extension of a torsion-free group $U$ by the cyclic group $C$, of order $p$,
we see that $CU$ cannot contain non-trivial elements of finite orders other than $p$. Recalling that $H \leqslant CU$, allows us to conclude that $H$ is torsion-free.

Since $|G:H|<\infty$ we can argue as in the case of $K$ above (using Lemma \ref{lem:f-cd-t-f}) to deduce that $\overline{H}=CU$ must be torsion-free. The latter
contradicts the fact that it contains $C$, completing the proof of the proposition.
 \end{proof}

\begin{cor}\label{cor:bij_cc} Suppose that $G$ is a finitely generated residually finite cohomologically good group with ${\rm vcd}(G)<\infty$. Then $G$ has finitely many conjugacy classes of subgroups of prime power order, and  the natural inclusion of $G$
in $\widehat G$ induces a bijection between the conjugacy classes of elements (or subgroups) of prime order in $G$ and in $\widehat G$.
\end{cor}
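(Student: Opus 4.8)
The plan is to combine the results already established with a finiteness statement for the profinite completion $\widehat G$. Since ${\rm vcd}(G)<\infty$, fix a torsion-free normal subgroup $K\lhd G$ of finite index $m=[G:K]$ with $\cd(K)={\rm vcd}(G)=d<\infty$. Then $K$ is finitely generated, residually finite and cohomologically good (goodness passes to finite index subgroups by \cite[Lemma~3.2]{GJZ}), so by Lemma~\ref{lem:f-cd-t-f} the closure $\overline K$ of $K$ in $\widehat G$ is torsion-free with $\cd(\overline K)\le d$; moreover $\overline K\lhd\widehat G$ is open of index $m$ (see \cite[Prop. 3.2.2]{RZ}), and $\widehat G$ is topologically finitely generated because $G$ is, so $\widehat G$ has only finitely many open subgroups of any given finite index. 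As $K$ and $\overline K$ are torsion-free, every finite subgroup of $G$ or of $\widehat G$ has order dividing $m$, so only the finitely many primes dividing $m$ can occur as orders of torsion elements.

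The crucial step is to show that $\widehat G$ has only finitely many conjugacy classes of subgroups of prime power order. Fix a prime $p\mid m$. By Sylow theory for profinite groups (\cite[Ch. 2]{RZ}) every finite $p$-subgroup of $\widehat G$ is conjugate into a fixed Sylow pro-$p$ subgroup $S$; then $W:=S\cap\overline K$ is an open normal torsion-free subgroup of $S$ of finite cohomological dimension with $S/W$ a finite $p$-group, each finite $P\leqslant S$ is a complement to $W$ in the open subgroup $PW$, there are only finitely many open subgroups between $W$ and $S$, and for each such $V$ the complements of $W$ in $V$ fall into finitely many $V$-conjugacy classes --- this set of classes being the continuous cohomology set $H^1(V/W,W)$, which is finite since $V/W$ is finite and $\cd(W)<\infty$. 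Hence $\widehat G$ has finitely many conjugacy classes of $p$-subgroups, and, running over the finitely many $p\mid m$, finitely many conjugacy classes of subgroups of prime power order altogether. I would then transfer this to $G$. For subgroups of order exactly $p$ it is immediate: by Theorem~\ref{thm:prime_order_cd} the elements of order $p$ are conjugacy distinguished in $G$, so if $\langle x\rangle,\langle y\rangle$ are non-conjugate order-$p$ subgroups of $G$ then $x\not\sim_G y^j$ for all $j$, hence $x\not\sim_{\widehat G}y^j$, hence $\langle x\rangle\not\sim_{\widehat G}\langle y\rangle$; thus the $G$-conjugacy classes of order-$p$ subgroups of $G$ inject into the finite set of $\widehat G$-conjugacy classes of order-$p$ subgroups of $\widehat G$. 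For order $p^k$ with $k\ge 2$ one shows instead that the map $\Phi_{p^k}$ from $G$-conjugacy classes of order-$p^k$ subgroups of $G$ to $\widehat G$-conjugacy classes of order-$p^k$ subgroups of $\widehat G$ (finite, by the above) has finite fibres: if $P\leqslant G$ has order $p^k$ and is $\widehat G$-conjugate to $F$, then, using $\overline K\lhd\widehat G$ and surjectivity of $G\to\widehat G/\overline K$, one may $G$-conjugate $P$ so that $P\overline K=F\overline K$, whereupon $P$ is a complement to $K$ in the fixed finite index subgroup $U:=F\overline K\cap G$ of $G$, and the conjugacy classes of such complements are again controlled by the cohomological finiteness. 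Hence $G$ has finitely many conjugacy classes of subgroups of prime power order, which is the first assertion.

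It remains to establish the bijection on conjugacy classes of prime-order elements (equivalently, of subgroups of prime order). Injectivity is exactly the consequence of Theorem~\ref{thm:prime_order_cd} displayed above. Surjectivity follows from Proposition~\ref{prop:surj}: for each of the finitely many primes $p\mid m$ we have just shown that $G$ has only finitely many conjugacy classes of subgroups of order $p$, so the proposition applies and yields that every element of order $p$ in $\widehat G$ is conjugate to an element of $G$; consequently every subgroup of order $p$ in $\widehat G$ is $\widehat G$-conjugate to a subgroup of $G$. This gives the asserted bijection and completes the proof.

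The principal obstacle is the cohomological finiteness invoked above --- that $H^1(Q,W)$ is finite whenever $Q$ is finite and $W$ is a profinite group of finite cohomological dimension --- together with the accompanying finite-fibre property of $\Phi_{p^k}$; I would attack the former by d\'evissage along a composition series of $Q$, reducing to coefficients in a torsion-free abelian pro-$\ell$ group of finite cohomological dimension, or by quoting a suitable result from the literature. Everything else is routine manipulation with Theorem~\ref{thm:prime_order_cd}, Proposition~\ref{prop:surj} and Lemma~\ref{lem:f-cd-t-f}.
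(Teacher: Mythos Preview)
Your route diverges from the paper's, and the point you flag as the ``principal obstacle'' is a genuine gap rather than a routine technicality. You claim that $H^1(Q,W)$ is finite whenever $Q$ is finite and $W$ is a profinite (or pro-$p$) group with $\cd(W)<\infty$, and propose d\'evissage to the abelian case. But finite cohomological dimension by itself does not control the size of $H^1$: a free pro-$p$ group of infinite rank has $\cd_p=1$, yet its abelianisation is a free abelian pro-$p$ group of infinite rank, whose $\cd_p$ is infinite --- so the d\'evissage does not land where you want it to. In our situation $W=S\cap\overline K$ is a Sylow pro-$p$ subgroup of $\widehat K$, and there is no reason for it to be topologically finitely generated, so the obstacle is real. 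The same issue recurs in your finite-fibre argument for $\Phi_{p^k}$: bounding conjugacy classes of complements to $K$ in a finite-index subgroup $U\leqslant G$ again requires a finiteness input beyond $\cd(K)<\infty$.

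The paper avoids the detour through $\widehat G$ entirely. It first invokes the theorem of Weigel and the second author \cite[Thm.~B]{WZ}: for $H$ finitely generated and cohomologically good, $H^n(H,\Z/p)$ is finite for every $n$; by the Universal Coefficient Theorem the same holds for $H_n(H,\Z/p)$. This is exactly the hypothesis of Brown's \cite[Lemma IX.13.2]{Brown}, which then yields directly that $G$ has only finitely many conjugacy classes of $p$-subgroups. With that established, Proposition~\ref{prop:surj} gives surjectivity and Theorem~\ref{thm:prime_order_cd} gives injectivity, finishing the proof in a few lines. In short, the missing ingredient in your argument is precisely the homological finiteness furnished by \cite{WZ}; once you have it, Brown's lemma applies to $G$ itself and the excursion through Sylow theory in $\widehat G$ becomes unnecessary.
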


\begin{proof} By the assumptions, $G$ has a normal torsion-free finite index subgroup $H$. It follows that there can be only finitely many primes $p$ such that
$G$ contains some non-trivial $p$-subgroup. Let $p$ be such a prime. Since $G$ is cohomologically good, the same is true for $H$, so we can use a theorem of Weigel and the second author \cite[Thm. B]{WZ}
claiming that $H^n(H,\Z/p)$ is finite for every $n \ge 0$. Since $\Z/p$ is a field, the Universal Coefficient Theorem tells us that the $\Z/p$-vector space
$H^n(H,\Z/p)$ is the dual of $H_n(H,\Z/p)$, hence the latter is also finite. Therefore we can apply a result of Brown \cite[Lemma IX.13.2]{Brown} claiming that
$G$ contains finitely many conjugacy classes of $p$-subgroups.

Thus we can use Proposition \ref{prop:surj}, to conclude that the natural map between the conjugacy classes of elements of prime order in $G$ and in
$\widehat G$ is surjective.
This map is injective by Theorem \ref{thm:prime_order_cd}, so the corollary is proved.
\end{proof}

\begin{rem}
In the case when the group $G$ is virtually of type ${\rm FP}$, Thm. 8.2 in the survey paper \cite{Lochak} asserts (without proof)
that, with some extra work, a stronger version of Corollary \ref{cor:bij_cc}
can be derived from a general result of Symonds \cite[Thm.~1.1]{Symonds} (this was also
confirmed to us by Symonds in a private communication).
\end{rem}

An important tool for establishing cohomological goodness was discovered by Grune\-wald, Jaikin-Zapirain and the second author, and, independently, by Lorensen:

\begin{prop}[{\cite[Prop.~3.6]{GJZ}},{\cite[Cor. 3.11]{Lor}}]\label{prop:HNN-good} Let $G=H*_{B=A^t}$ be an HNN-extension of a cohomologically good group $H$,
where the associated subgroups $A$ and $B$ are also cohomologically good. Suppose that $G$ is residually finite, $H$, $A$ and $B$ are separable in $G$ and
the profinite topology on $G$ induces the full profinite topologies on $H$, $A$, and $B$. Then $G$ is cohomologically good.
\end{prop}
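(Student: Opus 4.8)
The plan is to compare two Mayer--Vietoris long exact sequences in cohomology with finite coefficients --- the one for the discrete HNN extension $G$ and its profinite counterpart for $\widehat{G}$ --- and then to conclude via the five lemma. First I would check that the hypotheses force the splitting of $G$ to persist in the profinite completion: since $H$, $A$ and $B$ are separable in $G$ and the profinite topology of $G$ induces the full profinite topologies on each of them, the closures $\overline{H}$, $\overline{A}$, $\overline{B}$ in $\widehat{G}$ are canonically isomorphic to $\widehat{H}$, $\widehat{A}$, $\widehat{B}$ respectively, and $\widehat{G}=\overline{H}\,\amalg_{\,\overline{B}=\overline{A}^{\,t}}$ is a \emph{proper} profinite HNN extension; this belongs to the theory of profinite groups acting on profinite trees (cf. \cite{RZ}). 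In particular $\widehat{G}$ acts on a profinite tree with a single orbit of vertices, having stabiliser conjugate to $\widehat{H}$, and a single orbit of edges, having stabiliser conjugate to $\widehat{A}$, exactly mirroring the Bass--Serre tree of the discrete extension.

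Next, both $G$ and $\widehat{G}$ then carry Mayer--Vietoris long exact sequences: for every finite $G$-module $M$ (equivalently, every finite discrete $\widehat{G}$-module) one has
\[
\cdots\to H^n(\widehat{G},M)\xrightarrow{\res} H^n(\widehat{H},M)\xrightarrow{\partial} H^n(\widehat{A},M)\to H^{n+1}(\widehat{G},M)\to\cdots
\]
together with the analogous sequence for the discrete groups, the map $\partial$ being in each case the difference of the restriction along $A\hookrightarrow H$ and of a twisted restriction along $B\hookrightarrow H$ (restriction to $B$, composed with the identification $\widehat{B}\cong\widehat{A}$ induced by the stable letter and its action on $M$). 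Since the restriction homomorphisms $H^n(\widehat{G},M)\to H^n(G,M)$, $H^n(\widehat{H},M)\to H^n(H,M)$ and $H^n(\widehat{A},M)\to H^n(A,M)$ are natural with respect to inclusions of closed subgroups and with respect to conjugation, the two long exact sequences fit into a commutative ladder whose vertical arrows are precisely these restriction maps.

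Finally, I would invoke the hypotheses that $H$, $A$ and $B$ are cohomologically good: the vertical maps $H^n(\widehat{H},M)\to H^n(H,M)$ and $H^n(\widehat{A},M)\to H^n(A,M)$ are then isomorphisms for every $n\ge 0$ and every finite module $M$. Applying the five lemma to the five consecutive terms of the ladder centred at $H^n(\widehat{G},M)\to H^n(G,M)$ --- those sitting over $H^{n-1}(H,M)$, $H^{n-1}(A,M)$, $H^n(G,M)$, $H^n(H,M)$, $H^n(A,M)$, whose outer four vertical maps are isomorphisms --- shows that $H^n(\widehat{G},M)\to H^n(G,M)$ is an isomorphism as well (the case $n=0$ is also immediate directly, since $G$ is dense in $\widehat{G}$). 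As $n$ and $M$ were arbitrary, $G$ is cohomologically good. No induction on $n$ is needed, since the goodness of $H$ and $A$ supplies isomorphisms in all degrees at once.

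The step I expect to be the main obstacle is establishing, and then legitimately using, that $\widehat{G}$ genuinely is the proper profinite HNN extension built from $\widehat{H}$ and $\widehat{A}\cong\widehat{B}$, and --- once that is in place --- that its profinite Mayer--Vietoris sequence agrees term by term, and in particular on the connecting homomorphism $\partial$, with the discrete Mayer--Vietoris sequence of $G$ under the restriction maps. The verification of this naturality, rather than the concluding five-lemma argument (which is formal), is the technical heart of the proof.
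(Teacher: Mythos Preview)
The paper does not give its own proof of this proposition: it is quoted verbatim as a black-box result from \cite[Prop.~3.6]{GJZ} and \cite[Cor.~3.11]{Lor}. Your sketch is essentially the argument that appears in those references --- the separability and topology-induction hypotheses guarantee that $\widehat G$ is the proper profinite HNN extension of $\widehat H$ with associated subgroups $\widehat A\cong\widehat B$, one then writes down the profinite and discrete Mayer--Vietoris sequences, and the five lemma together with the goodness of $H$ and $A$ finishes the job. You have correctly identified the only substantive step: verifying that the profinite completion inherits the HNN structure and that the two Mayer--Vietoris ladders commute; everything after that is formal.
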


This allows us to show that in fact any group from the class $\AVR$ is cohomologically good.

\begin{prop} \label{prop:VCSH-good} Let $G \in \AVR$. Then $G$ is
residually finite, cohomologically good and has finite virtual cohomological dimension.
\end{prop}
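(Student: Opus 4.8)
The plan is to reduce the statement to the case of a finitely generated right angled Artin group $A=A(\Gamma)$, and then transport the three properties along the operations defining $\VR$ and $\AVR$. For this I will use the following standard permanence facts: cohomological goodness passes to finite index subgroups and to finite index overgroups of residually finite groups (\cite[Lemma 3.2]{GJZ}); residual finiteness passes to arbitrary subgroups and to finite index overgroups; and $\cd(K)\le\cd(L)$ whenever $K\leqslant L$, while ${\rm vcd}$ is a commensurability invariant. So, given $G\in\AVR$, fix a finite index subgroup $G_0\leqslant G$ with $G_0\in\VR$; thus $G_0$ is a retract of a finite index subgroup $A_0$ of some finitely generated right angled Artin group $A=A(\Gamma)$. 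Granting for the moment that $A$ is residually finite, cohomologically good and of finite cohomological dimension, I would argue by passing successively from $A$ to its finite index subgroup $A_0$, then to the retract $G_0$ of $A_0$, and finally from $G_0$ to $G$: the group $A_0$ inherits all three properties as a finite index subgroup of $A$; $G_0$, being a retract of $A_0$, is residually finite (a subgroup of $A_0$), of finite $\cd$ (a subgroup of $A_0$), and cohomologically good by Lemma \ref{lem:retract-good}; and finally $G$ is residually finite because it has a residually finite subgroup of finite index, cohomologically good by \cite[Lemma 3.2]{GJZ}, and satisfies ${\rm vcd}(G)={\rm vcd}(G_0)\le\cd(G_0)<\infty$. (Here ${\rm vcd}(G_0)$ makes sense because $G_0$ is torsion-free, being a subgroup of the torsion-free group $A$.)

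Thus the entire content is the assertion that a finitely generated right angled Artin group $A=A(\Gamma)$ is residually finite, cohomologically good and of finite cohomological dimension. Residual finiteness is classical: right angled Artin groups are linear, in fact residually torsion-free nilpotent. Finiteness of $\cd(A)$ is also standard: the Salvetti complex of $A$ is a compact aspherical cube complex whose dimension equals the clique number of $\Gamma$, so $\cd(A)$ is bounded above by that clique number, which is finite since $\Gamma$ is finite.

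It remains to prove that $A(\Gamma)$ is cohomologically good. One may cite this directly from \cite{GJZ,Lor}; alternatively, one proves it by induction on the number of vertices of $\Gamma$, using Proposition \ref{prop:HNN-good} together with its analogue for amalgamated free products (also established in \cite{GJZ,Lor}). If $\Gamma$ is a complete graph then $A(\Gamma)\cong\Z^{|V(\Gamma)|}$ is finitely generated nilpotent, hence cohomologically good; this covers the base cases $|V(\Gamma)|\le1$. Otherwise pick a vertex $v$ whose star $\mathrm{st}(v)$ is a proper subgraph of $\Gamma$, and let $L=\mathrm{lk}(v)$. Since $v$ commutes with every vertex of $L$, one has the amalgamated free product decomposition
\[
A(\Gamma)=A(\Gamma\setminus v)\,*_{A(L)}\,A\bigl(\mathrm{st}(v)\bigr),\qquad A\bigl(\mathrm{st}(v)\bigr)=A(L)\times\langle v\rangle,
\]
where $A(\Gamma\setminus v)$, $A(L)$ and $A(\mathrm{st}(v))$ are right angled Artin groups on strictly fewer vertices than $\Gamma$, hence cohomologically good by the inductive hypothesis. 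Moreover each of these three subgroups is a full subgroup of $A(\Gamma)$, and therefore a retract of $A(\Gamma)$ (\cite[Sec. 6]{M-RAAG}); consequently each is separable in $A(\Gamma)$ and the profinite topology of $A(\Gamma)$ induces its full profinite topology (as in the proof of Lemma \ref{lem:retract-good}, via \cite[Lemma 3.1.5]{RZ}). Hence all hypotheses of the amalgamated product version of Proposition \ref{prop:HNN-good} are met, and $A(\Gamma)$ is cohomologically good. Combined with the first paragraph, this completes the proof.

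The single genuine obstacle is the cohomological goodness of right angled Artin groups; everything else is routine bookkeeping with the permanence properties recalled in the first paragraph. Within the inductive argument, the delicate point is the verification of the separability and induced-topology hypotheses of Proposition \ref{prop:HNN-good} for the vertex and edge groups of the splitting — and these hold precisely because full subgroups of a right angled Artin group are retracts. If one prefers not to invoke the amalgamated product version of Proposition \ref{prop:HNN-good}, one can argue instead directly with the Mayer--Vietoris sequences in discrete and in profinite cohomology, exactly as in the proof of Proposition \ref{prop:HNN-good}.
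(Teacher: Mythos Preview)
Your argument is correct and follows the same overall architecture as the paper: reduce to the claim that finitely generated right angled Artin groups are residually finite, of finite $\cd$, and cohomologically good, then push these properties through the chain $A\leadsto A_0\leadsto G_0\leadsto G$ using Lemma~\ref{lem:retract-good} and \cite[Lemma~3.2]{GJZ}. The one genuine difference is in the inductive proof of goodness for $A(\Gamma)$: the paper removes a single vertex and uses the \emph{HNN} splitting $A(\Gamma)=A_S *_{A_T}$ over full subgroups (so that Proposition~\ref{prop:HNN-good}, as stated, applies directly), whereas you use the \emph{amalgamated product} splitting $A(\Gamma)=A(\Gamma\setminus v)*_{A(\mathrm{lk}(v))}A(\mathrm{st}(v))$ together with the amalgam analogue of Proposition~\ref{prop:HNN-good} from \cite{GJZ,Lor}. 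Both decompositions have full subgroups as vertex/edge groups, so the separability and induced-topology hypotheses are verified identically via the retract property; your base case (complete $\Gamma$, i.e.\ $A\cong\Z^n$) is broader than the paper's ($|V|=0$), but this is harmless. The paper's HNN route has the minor practical advantage that it quotes only the result actually stated in the paper (Proposition~\ref{prop:HNN-good}), while your route requires importing the amalgam version from the same references; conversely, your splitting is perhaps more widely familiar as the standard inductive decomposition of a RAAG.
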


\begin{proof}
By definition of the class $\AVR$, some finite index subgroup $H \leqslant G$ is a virtual retract of some right angled Artin group $A$.
Right angled Artin groups are residually finite
(see, for example, \cite[Ch. 3, Thm 1.1]{Droms}), hence $H$ and $G$ are both residually finite. The cohomological dimension $\cd(A)$, of $A$,
is equal to the clique number of the associated graph (this follows from the fact that $A$ acts freely and cocompactly on a CAT($0$)
cube complex of the appropriate dimension -- see \cite[Sec. 3.6]{Charney}), therefore
$\cd(H)\le \cd(A)<\infty$. Thus $\mathrm{vcd}(G)=\cd(H)<\infty$.

To show that $G$ is cohomologically good, we will first prove this for all right angled Artin groups (cf. \cite[Thm. 3.15]{Lor} and \cite{Lor-corrig}).
Let $B$ be a right angled Artin group corresponding to some finite simplicial graph $\Gamma$ with vertex set $V$. We will show that $B$ is
cohomologically good by induction on $|V|$. If $|V|=0$ then $B=\{1\}$ and the claim holds trivially.
Now, suppose that $|V|>0$ and choose any $S \subset V$ with $|V\setminus S|=1$.
Then $B$ splits as an HNN-extension of $B_S$ over another full subgroup $B_T$,
for some $T \subset S$ (see \cite[Sec. 7]{M-RAAG}).
Since $B_S$ and $B_T$ are a right angled Artin groups with less than $|V|$ generators, they are cohomologically good by the induction hypothesis.
Recall that both $B_T$ and $B_S$ are retracts of $B$ and $B$ is residually finite, therefore these subgroups are separable in $B$ and
the profinite topology of $B$ induces the full profinite topologies on these subgroups (cf. \cite[Lemma 3.1.5]{RZ}).
Hence $B$ is cohomologically good by Proposition~\ref{prop:HNN-good}.

Thus we have shown that any right angled Artin group is cohomologically good. Therefore, according to Lemma \ref{lem:retract-good},
the finite index subgroup $H \leqslant G$ is cohomologically good, as a virtual retract of $A$. Hence $G$ is itself cohomologically good by \cite[Lemma 3.2]{GJZ}.
\end{proof}

Combining Theorem \ref{thm:prime_order_cd} with Proposition \ref{prop:VCSH-good} and Lemma \ref{lem:hcs_sbgp} we immediately obtain the following statement:

\begin{cor}\label{cor:prime_index} Let $G$ be a virtually compact special group (or, more generally, let $G \in \AVR$). Then every element of prime order is
conjugacy distinguished in $G$.
\end{cor}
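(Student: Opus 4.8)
The plan is to derive Corollary \ref{cor:prime_index} by assembling the three ingredients mentioned: Proposition \ref{prop:VCSH-good}, Lemma \ref{lem:hcs_sbgp}, and Theorem \ref{thm:prime_order_cd}. First I would observe that Lemma \ref{lem:hcs_sbgp} applies only to virtually compact special groups, so the proof naturally splits into two reductions. For the stated generality ($G \in \AVR$), the essential input is Proposition \ref{prop:VCSH-good}, which tells us that any $G \in \AVR$ is residually finite, cohomologically good, and has $\mathrm{vcd}(G) < \infty$. These are precisely the hypotheses of Theorem \ref{thm:prime_order_cd}, so that theorem immediately yields that every element of prime order in $G$ is conjugacy distinguished.

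More explicitly, I would write: let $G \in \AVR$. By Proposition \ref{prop:VCSH-good}, $G$ is residually finite, cohomologically good, and $\mathrm{vcd}(G) < \infty$. Hence the hypotheses of Theorem \ref{thm:prime_order_cd} are satisfied, so every element of prime order in $G$ is conjugacy distinguished, as required. The case of a virtually compact special group $G$ is subsumed, since every virtually compact special group lies in $\AVR$ by the result of Haglund and Wise quoted in the discussion of the classes $\VR$ and $\AVR$ (and also used in Lemma \ref{lem:hcs_sbgp}).

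Since this is essentially a one-line deduction, there is no real obstacle: the substantive work is entirely contained in the already-proved Proposition \ref{prop:VCSH-good} (which itself rests on the goodness of right angled Artin groups, Lemma \ref{lem:retract-good}, and Proposition \ref{prop:HNN-good}) and in Theorem \ref{thm:prime_order_cd} (whose proof uses Proposition \ref{prop:prime_order} and the cohomological argument of Brown). If one wanted to phrase the proof using Lemma \ref{lem:hcs_sbgp} as suggested, the role of that lemma would merely be to supply, in the virtually compact special case, a concrete finite-index torsion-free normal subgroup $H \lhd G$ witnessing $\mathrm{vcd}(G) < \infty$; but since Proposition \ref{prop:VCSH-good} already guarantees finite virtual cohomological dimension for all of $\AVR$, invoking Lemma \ref{lem:hcs_sbgp} is not strictly necessary for the conclusion.
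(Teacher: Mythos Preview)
Your argument is correct and matches the paper's approach exactly: the paper likewise states that the corollary follows immediately by combining Proposition~\ref{prop:VCSH-good} with Theorem~\ref{thm:prime_order_cd} (and Lemma~\ref{lem:hcs_sbgp}). Your observation that Lemma~\ref{lem:hcs_sbgp} is not strictly needed---since Proposition~\ref{prop:VCSH-good} already supplies finite virtual cohomological dimension---is accurate.
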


\section{Proof of the main result}\label{sec:proof}
Before proving the main result we will need two more auxiliary statements.

\begin{lemma}\label{lem:inf_order} Let $G\in \X$ and let $x \in G$ be an element of infinite order. Then $x$ is conjugacy distinguished in $G$.
\end{lemma}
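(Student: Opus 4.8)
The plan is to use Proposition~\ref{prop:C-Z_crit} with a well-chosen finite index normal subgroup $H \lhd G$. By Lemma~\ref{lem:hcs_sbgp}, we may fix a torsion-free, hereditarily conjugacy separable $H \lhd G$ with $H \in \VR$; let $m = |G:H|$. Given $x \in G$ of infinite order, we must verify conditions (i) and (ii) of Proposition~\ref{prop:C-Z_crit} for the centralizer $C = C_G(x^m)$, noting that $x^m \in H$. Condition (ii) is immediate from Lemma~\ref{lem:hyp_in_X}(b): every finite index subgroup of $C_G(x^m)$ is separable in $G$, since $G \in \X$. So the real content is condition (i): showing that $x$ is conjugacy distinguished in $C = C_G(x^m)$.

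For condition (i), the key structural fact I would exploit is the description of centralizers of infinite-order elements in hyperbolic groups. Since $G$ is hyperbolic and $x^m$ has infinite order, $C_G(x^m)$ is virtually cyclic (this is standard — see \cite[Ch.~III.$\Gamma$, Cor.~3.10]{B-H}), hence it contains a unique maximal finite normal subgroup $E \lhd C$, and $E_0 := E \cdot \langle x^m \rangle$ (or rather the preimage of the torsion part) gives $C/E$ virtually $\Z$. More precisely, an infinite virtually cyclic group $C$ either maps onto $\Z$ with finite kernel, or maps onto the infinite dihedral group $D_\infty$ with finite kernel. In the first case, $C = F \rtimes \langle t\rangle$ or is a finite-by-$\Z$ group, and an element is conjugacy distinguished by a direct argument using the retraction to $\Z$ together with conjugacy separability of the finite-by-cyclic quotient pieces. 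Since $C_G(x^m) \in \X$ by Lemma~\ref{lem:hyp_in_X}(a), it is in particular residually finite and even hereditarily conjugacy separable — wait, that last point needs care: $\X$-groups are not yet known to be conjugacy separable (that is the theorem being proved). So instead I would argue directly: an infinite virtually cyclic group is conjugacy separable. This is classical (it is virtually $\Z$, hence polycyclic, hence conjugacy separable by \cite{Form, Rem}), and in fact hereditarily conjugacy separable since finite index subgroups of virtually cyclic groups are virtually cyclic. Thus $C_G(x^m)$ is (hereditarily) conjugacy separable, and in particular $x$ is conjugacy distinguished in it, giving condition (i).

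So the argument runs: (1) fix $H \lhd G$ torsion-free, h.c.s., of index $m$, via Lemma~\ref{lem:hcs_sbgp}; (2) observe $x^m \in H$ and $C_G(x^m)$ is infinite virtually cyclic because $G$ is hyperbolic and $x^m$ has infinite order; (3) infinite virtually cyclic groups are (hereditarily) conjugacy separable, being virtually $\Z$ hence polycyclic, so $x$ is conjugacy distinguished in $C_G(x^m)$, verifying (i); (4) condition (ii) holds by Lemma~\ref{lem:hyp_in_X}(b); (5) apply Proposition~\ref{prop:C-Z_crit} to conclude $x$ is conjugacy distinguished in $G$.

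The main obstacle — or rather the point demanding the most care — is ensuring that $x^m$ genuinely has infinite order (so the virtual cyclicity of its centralizer kicks in): since $x$ has infinite order in a hyperbolic group, torsion is bounded, and in fact $x^m \neq 1$, so $x^m$ has infinite order; one should also double check that $x \in C_G(x^m)$ (clear, as $x$ commutes with $x^m$) so that condition (i) is even meaningful. A secondary subtlety is that we want hereditary conjugacy separability of $C_G(x^m)$, not just conjugacy separability, to be safe when invoking the criterion — but Proposition~\ref{prop:C-Z_crit} as stated only requires $x$ to be conjugacy distinguished in $C_G(x^m)$, so conjugacy separability of the virtually cyclic group $C_G(x^m)$ suffices. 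The rest is routine bookkeeping with finite index subgroups and the profinite topology.
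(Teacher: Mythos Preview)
Your proof is correct and follows essentially the same route as the paper: choose a torsion-free hereditarily conjugacy separable normal subgroup $H\lhd G$ of index $m$ via Lemma~\ref{lem:hcs_sbgp}, observe that $C_G(x^m)$ is virtually cyclic (since $x^m$ has infinite order in the hyperbolic group $G$) and hence conjugacy separable, verify condition~(ii) via Lemma~\ref{lem:hyp_in_X}(b), and apply Proposition~\ref{prop:C-Z_crit}. The paper's proof is terser---it simply asserts that virtual cyclicity implies conjugacy separability without spelling out the polycyclic route---but the argument is the same.
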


\begin{proof} By Lemma \ref{lem:hcs_sbgp}, $G$ has a normal subgroup $H$, of some finite index $m \in \N$, such that $H$ is hereditarily conjugacy separable.
By the assumptions,  $x^m \in H$ is an infinite order element in the hyperbolic group $G$, so its centralizer $C_G(x^m)$ is virtually cyclic (cf. \cite[Prop. 3.5]{Mih}).
It follows that $C_G(x^m)$ is conjugacy separable. The second condition of Proposition \ref{prop:C-Z_crit} follows from Lemma   \ref{lem:hyp_in_X}.(b).
Therefore we can use this proposition to conclude that
$x$ is conjugacy distinguished in $G$, as required.
\end{proof}

\begin{cor}[cf. {\cite[Cor. 9.11]{M-RAAG}}]\label{cor:t-f_hcs} If $G \in \X$ and $H \leqslant G$ is a torsion-free subgroup of finite index, then $H$ is hereditarily conjugacy separable.
\end{cor}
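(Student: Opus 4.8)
The plan is to reduce the statement to the torsion-free case of the main theorem that was already established in \cite[Cor. 9.11]{M-RAAG}, after first observing that $H$ inherits enough structure. Since $G \in \X$, Remark \ref{rem:fi_in_X} tells us that every finite index subgroup of $G$ again lies in $\X$; in particular $H \in \X$, and moreover every finite index subgroup of $H$ lies in $\X$ and is torsion-free (being a subgroup of the torsion-free group $H$). So it suffices to show that every torsion-free group in $\X$ is conjugacy separable, and then apply this to each finite index subgroup of $H$ to obtain hereditary conjugacy separability.

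The main point is therefore: if $K \in \X$ is torsion-free, then $K$ is conjugacy separable. By Proposition \ref{prop:VCSH-good} (applied via the inclusion $\X \subseteq \AVR$), the group $K$ is residually finite, cohomologically good, and has $\mathrm{vcd}(K) = \mathrm{cd}(K) < \infty$ since $K$ is torsion-free. Now I would invoke Lemma \ref{lem:hcs_sbgp}: $K$ has a finite index normal subgroup $L \lhd K$ with $L \in \VR$, hence $L$ is hereditarily conjugacy separable. The remaining task is to promote conjugacy separability from $L$ to $K$, and here the natural tool is Proposition \ref{prop:C-Z_crit}. Given any $x \in K$, set $m = |K:L|$, so $x^m \in L$. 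If $x$ has infinite order, then $x$ is conjugacy distinguished in $K$ directly by Lemma \ref{lem:inf_order}. If $x$ has finite order, then since $K$ is torsion-free we must have $x = 1$, which is trivially conjugacy distinguished. Thus every element of $K$ is conjugacy distinguished, i.e. $K$ is conjugacy separable.

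Assembling the pieces: for the given $H \leqslant G$ of finite index and torsion-free, and for any finite index subgroup $H' \leqslant H$, we have $H' \in \X$ by Remark \ref{rem:fi_in_X} and $H'$ is torsion-free, so the previous paragraph shows $H'$ is conjugacy separable. Since this holds for all such $H'$, the group $H$ is hereditarily conjugacy separable. I do not anticipate a serious obstacle here: the genuinely hard input is Lemma \ref{lem:inf_order} (which rests on the quasiconvexity of centralizers and Proposition \ref{prop:C-Z_crit}), and that is already available; the rest is bookkeeping with the permanence properties of $\X$ and the observation that torsion-freeness kills the finite-order case entirely. The only point requiring a little care is making sure the reduction is phrased for \emph{every} finite index subgroup of $H$, not just $H$ itself, so that the conclusion is genuinely \emph{hereditary} conjugacy separability.
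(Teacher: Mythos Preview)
Your proposal is correct and follows essentially the same route as the paper: reduce to showing that any torsion-free member of $\X$ is conjugacy separable, invoke Lemma~\ref{lem:inf_order} for elements of infinite order, and observe that the identity is the only finite-order element. The references to Proposition~\ref{prop:VCSH-good}, Lemma~\ref{lem:hcs_sbgp}, and Proposition~\ref{prop:C-Z_crit} in your middle paragraph are red herrings you never actually use (and the paper does not use them here either); the argument stands on Remark~\ref{rem:fi_in_X} and Lemma~\ref{lem:inf_order} alone.
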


\begin{proof} Note that $H \in \X$ by Remark \ref{rem:fi_in_X}, hence any element of infinite order is conjugacy distinguished in $H$ by Lemma \ref{lem:inf_order}.
Since $H$ is torsion-free, the only element of finite order in $H$, the identity element, must also be conjugacy distinguished.
Thus all elements of $H$ are conjugacy distinguished, i.e., $H$ is conjugacy separable.

Clearly the same argument applies to any finite index subgroup $K \leqslant H$. Therefore, $H$ is hereditarily conjugacy separable.
\end{proof}
\begin{proof}[Proof of Theorem \ref{thm:main}.] Consider any group $G \in \X$.
Choose a torsion-free normal subgroup $H\lhd G$ such that $n=|G:H|$ is minimal (such $H$ exists by Lemma \ref{lem:hcs_sbgp}).
We will prove the theorem by induction on $n$. If $n=1$ the statement holds because $H$ is hereditarily conjugacy separable by Corollary \ref{cor:t-f_hcs}.
So we can assume that $n>1$ and we have already established hereditary conjugacy separability for every group from
$\X$ which has a torsion-free normal subgroup of index less than $n$.

We will first show that $G$ is conjugacy separable. So, consider any element $x \in G$. If $x$ has infinite order, then $x$ is conjugacy distinguished in $G$ by Lemma \ref{lem:inf_order}. Thus we can suppose that $x$ has finite order.

Set $K=H \langle x \rangle$ and observe that $K \in \X$ by Remark \ref{rem:fi_in_X}.
If $|K:H|<n$ then $K$ is hereditarily conjugacy separable by the  induction hypothesis, so $x$ is conjugacy distinguished in $K$. But then Lemma \ref{lem:sep_in_fi_sbgp} implies that $x$ is conjugacy distinguished in $G$, as
$|G:K| \le |G:H|<\infty$.

Therefore we can assume that $|K:H|=n=|G:H|$. It follows that $G=K$, i.e., $G=H \langle x \rangle \cong H \rtimes \langle x \rangle$, as $H$ is torsion-free and $x$ has finite order (which must then be equal to $n$).
We will now consider two cases.

{\it Case 1:} $n=p$ is a prime number.  Then $x$ is conjugacy distinguished in $G$ by Corollary~\ref{cor:prime_index}.

{\it Case 2:} $n$ is a composite number. Thus $n=lm$ for some $l,m \in \N$, $1<l,m<n$.
We aim to use the criterion from Proposition \ref{prop:C-Z_crit}, so let's check that all of its assumptions are satisfied.

Let $F=H \langle x^m \rangle \leqslant G$. Then $F \in \X$ by Remark \ref{rem:fi_in_X} and $F \cong H \rtimes (\Z/l)$. Thus $F$ is hereditarily conjugacy separable by the induction hypothesis, as $|F:H|=l<n$. Evidently, $F\lhd G$ and $|G:F|=m$.
Every finite index subgroup of $C_G(x^m)$ is separable in $G$  by Lemma \ref{lem:hyp_in_X}.(b), so it remains to check that $x$ is conjugacy distinguished in $C_G(x^m)$.

Set $H_1=C_G(x^m) \cap H$, and observe that $C_G(x^m)=H_1 \langle x \rangle \cong H_1 \rtimes (\Z/n)$. Moreover, in view of Remark \ref{rem:fi_in_X},
$H_1 \in \X$ as $|C_G(x^m):H_1|=n<\infty$ and $C_G(x^m) \in \X$ by Lemma \ref{lem:hyp_in_X}.(a).

To verify that $x$ is conjugacy distinguished in $C_G(x^m)$, consider any element $y \in C_G(x^m)$ which is not conjugate to $x$ in $C_G(x^m)$.
Since $x^m$ is central in $C_G(x^m)$, we can let $L$ be the quotient of  $C_G(x^m)$ by $\langle x^m \rangle$, and let $\phi:C_G(x^m) \to L$ denote the natural epimorphism.

Clearly $\phi(H_1) \cong H_1$, as $H_1 \cap \ker \phi=\{1\}$.
Therefore $\phi(H_1)$ is torsion-free and $L=\phi(H_1) \langle
\phi(x) \rangle \cong H_1 \rtimes (\Z/m)$, implying that $L \in
\X$ (by Remark \ref{rem:fi_in_X}). Consequently, $L$ is
hereditarily conjugacy separable by the induction hypothesis, as
$|L:H_1|=m<n$.
Let us again consider two separate subcases.

{\it Subcase 2.1:} suppose that $\phi(x)$ and $\phi(y)$ are not conjugate in $L$. Then there is a finite group $M$ and a homomorphism
$\psi:L \to M$ such that $\psi(\phi(x))$ is not conjugate to $\psi(\phi(y))$ in $M$. Thus the homomorphism $\eta=\psi\circ \phi:C_G(x^m) \to M$ will distinguish the conjugacy classes of $x$ and $y$, as required.

{\it Subcase 2.2:} assume that $\phi(x)$ is conjugate to $\phi(y)$ in $L$. Since $\ker \phi \subseteq \langle x \rangle$, we can deduce that
there is $h \in C_G(x^m)$ such that $hyh^{-1}=z$, for some $z \in \langle x \rangle$.

Now, $z \neq x$, since we assumed that $y$ is not conjugate to $x$ in $C_G(x^m)$. Therefore $x=\xi(x) \neq \xi(z)=z$, where $\xi: C_G(x^m) \to \langle x \rangle$ is the natural retraction (coming from the decomposition of
$C_G(x^m)$ as a semidirect product of $H_1$ and $\langle x\rangle$). Recalling that $\langle x \rangle$ is abelian, we see that $\xi(y)=\xi(hyh^{-1})=\xi(z)$. Therefore $\xi(y)$ is not conjugate to $\xi(x)$ in the finite cyclic group
$\langle x \rangle$. Thus we have distinguished the conjugacy classes of $x$ and $y$ in this finite quotient of $C_G(x^m)$.

Subcases 2.1 and 2.2 together imply that $x$ is conjugacy distinguished in $C_G(x^m)$. Therefore we have verified all of
the assumptions of Proposition \ref{prop:C-Z_crit} (for $G$ and the finite index normal subgroup
$F \lhd G$), so we can apply this proposition to deduce that $x$ is conjugacy distinguished in $G$. Thus Case 2 is completed.

Cases 1 and 2 exhaust all possibilities, so we have established conjugacy separability for any group $G \in \X$, which possesses a torsion-free normal subgroup $H \lhd G$ of index $n$. If $K \leqslant G$ is any
subgroup of finite index, then $K \in \X$ by Remark \ref{rem:fi_in_X} and $H\cap K$ is a torsion-free normal subgroup in $K$ of index at most $n$.
So, either using the induction hypothesis (if $|K:(H\cap K)|<n$) or the above argument (if $|K:(H\cap K)|=n$), we can conclude that $K$ is conjugacy separable as well.
Hence $G$ is hereditarily conjugacy separable, and the step of induction has been established. This finishes the proof of the theorem.
\end{proof}

\end{document}